\def \R {\mathbb{R}}
\def \div {\mathrm{div}}
\def \suchthat {\ \big | \ }
\def \J {\mathscr{J}}
\def \dist {\mathrm{dist}}
\def\RR{{\mathbb R}}
\def\Om{{\Omega}}
\def\dfrac{\displaystyle\frac}
\def\epsilon{\varepsilon}
\newtheorem{theorem}{Theorem}
\newtheorem{lemma}[theorem]{Lemma}
\newtheorem{proposition}[theorem]{Proposition}
\newtheorem{corollary}[theorem]{Corollary}
\theoremstyle{definition}
\theoremstyle{remark}
\newtheorem{remark}[theorem]{Remark}
\numberwithin{equation}{section}
\newcommand{\intav}[1]{\mathchoice {\mathop{\vrule width 6pt height 3 pt depth  -2.5pt
\kern -8pt \intop}\nolimits_{\kern -6pt#1}} {\mathop{\vrule width
5pt height 3  pt depth -2.6pt \kern -6pt \intop}\nolimits_{#1}}
{\mathop{\vrule width 5pt height 3 pt depth -2.6pt \kern -6pt
\intop}\nolimits_{#1}} {\mathop{\vrule width 5pt height 3 pt depth
-2.6pt \kern -6pt \intop}\nolimits_{#1}}}
\title[Singular Free Boundary Problems]{Free boundary problems involving singular weights}
\author[J. Lamboley]{Jimmy Lamboley}
\address{Sorbonne  Universit\'e,  Universit\'e
  Paris  Diderot,  CNRS,  Institut  de  Math\'ematiques  de Jussieu-Paris Rive
 Gauche, IMJ-PRG, F-75005, Paris, France}{}
\email{jimmy.lamboley@imj-prg.fr}
\author[Y. Sire]{Yannick Sire}
\address{Johns Hopkins University, Krieger Hall, N. Charles St., Baltimore, MD 21218}{}
\email{sire@math.jhu.edu}
\author[E.V. Teixeira]{Eduardo V. Teixeira}
\address{University of Central Florida, 4393 Andromeda Loop N, Orlando, FL 32816}{}
\email{eduardo.teixeira@ucf.edu}
\begin{document}

\subjclass[2010]{Primary 35B65. Secondary 35J60, 35J70}

\keywords{Free boundary problems, Muckenhoupt weights, geometric regularity}

\begin{abstract}
In this paper we initiate  the investigation of free boundary minimization problems ruled by general singular operators with $A_2$ weights. We show existence, boundedness and continuity of minimizers. The key novelty is a sharp $C^{1+\gamma}$ regularity result for  solutions at their singular free boundary points. We also show a corresponding non-degeneracy estimate. 

\end{abstract}
\maketitle

\tableofcontents

\noindent{\bf Aknowledgement:} The authors would like to thank the hospitality of the Univ. Federal do Ceara in Fortaleza, where this work was initiated, and the Brazilian-French Network in Mathematics as well. This work was also supported by the projects ANR-18-CE40-0013 SHAPO financed by the French Agence Nationale de la Recherche (ANR) and the Brazilian National Council for Scientific and Technological Development,CNPq, under MCTI/CNPQ/Universal 14/2014 - Faixa C project No 441632/2014-9. Y.S. would like to thank the Simons foundation. 

\section{Introduction}

We study local minimizers of singular, discontinuous functionals of the form 
\begin{equation} \label{SingFB}
	\mathscr{J}(u, \Omega)= \int_\Omega  \left (\omega (x) |\nabla u |^2+ \chi_{\{u>0\}} \right ) dx \longrightarrow \text{min},
\end{equation}
where $\Omega$ is a bounded domain of $\R^d$, $d\ge 2$, and $\omega(x)$  is a measurable, singular $A_2$ weight in the sense that $0\le  \omega \le +\infty$, both $\omega$ and $\omega^{-1}$ are locally integrable, and
$$	
	 \left ( \int_{B_r(z_0)} \omega \right) \left (  \int_{B_r(z_0)} \omega^{-1} \right ) \le Cr^{2d} ,
$$
for all balls $ B_r(z_0) \subset \Omega$. Thus, $\omega$ may become zero or infinity along a lower-dimensional subset of $\Omega$, hereafter denoted by: 
$$
	\Lambda_0 (\omega) := \omega^{-1}(0), \quad \Lambda_\infty(\omega) := \omega^{-1}(+\infty).
$$
It will also be convenient to denote $\Lambda(\omega) := \Lambda_0 (\omega)  \cup \Lambda_\infty (\omega)$. The class of $A_2$ weight functions was introduced by  Muckenhoupt \cite{muck} and is of central importance in modern harmonic analysis and its applications. A canonical example of an $A_2$ function is $|x|^\alpha$  with $-d < \alpha < d$ --- having an isolated singularity at the origin. Recent results, see for instance \cite{cafS},  show  that $A_2$ weights play an important role in the theory of non-local diffusive problems. In particular, when $\omega(x) = |x_1|^\beta$, $-1 < \beta < 1$, and the axis $\left \{ x_1=0 \right \}$ is a subset of $\partial \Omega$, then \eqref{SingFB} falls into the case studied in \cite{CRS}, where fractional cavitation problems are considered. 
 
The mathematical analysis of free boundary cavitation problems goes back to the pioneering work of Alt and Caffarelli \cite{AC}, corresponding to the case $\omega \equiv 1$ in \eqref{SingFB}.  In the present work, we start the investigation of free boundary problems of the type \eqref{SingFB}, for possibly singular weights, that is when $\Lambda_\infty(\omega) \not = \emptyset$. Similarly, we can treat the degenerate case, i.e. when $\Lambda_0(\omega) \not = \emptyset$. However, for didactical purposes, in this paper we shall restrict the analysis to singular weights.

We show existence of a local minimizer $u$, and analyze analytic and weak geometric properties of the free boundary $\partial \left \{ u>0 \right \} \cap \Omega$. The latter task is, in principle, a delicate issue. For instance, one notices, for singular weights, the existence of two distinct types of free boundary points: 
\begin{enumerate}
	\item $\partial \left \{ u>0 \right \} \cap \Lambda_\infty  = \emptyset $;
	\item $\partial \left \{ u>0 \right \} \cap \Lambda_\infty  \neq \emptyset $.
\end{enumerate}

Case $(1)$ refers to a non-homogeneous version of the Alt-Caffarelli problem, \cite{AC}, as, away from the singular set, the weight is uniformly elliptic, see for instance \cite{deSilva, dosPT, FS, T1}. Case (2) is rather more delicate, and in particular, a central result we prove in this current work classifies the geometric behavior of a local minimizer near a free boundary point $z_0 \in \partial \{u> 0\} \cap \Lambda_\infty$,  in terms of its singularity rate near $z_0$ --- a purely analytic information of the problem. Indeed, we show local minimizers are {\it precisely} $C^{1+ \gamma}$ smooth along their corresponding free boundaries, where $\gamma$ is half of the geometric blow-up rate of $\omega$ as it approaches the singular set $\Lambda_\infty$; see condition (H2) for precise definitions.

The paper is organized as follows. In Section \ref{sct math setup} we formally present the minimization problem we shall study. A brief description of the initial mathematical tools required in the investigation of $A_2$-singular free boundary problems is also delivered in that section. In Section \ref{sct Exist and Linfty bounds} we discuss existence and $L^\infty$ bounds for minimizers, whereas in Section \ref{sct comp} we establish various compactness properties for family of minimizers. Section \ref{section-homog} is devoted to some considerations related to homogenization. In Section \ref{sct sharp reg} we prove the key novel result of the paper, namely that solutions to $A_2$ cavitation problems are $C^{1+\gamma}$ regular at their singular free boundary points, where $\gamma$ is a sharp prescribed value. In particular, if $z_0$ is a free boundary point and $\omega(z_0)  < + \infty$, that is it is non-singular, then $\gamma(z_0) = 0$ and we recover the classical Alt-Caffarelli Lipschitz regularity estimate for cavitation problems.  Finally in Section \ref{sct nondeg} we obtain a quantitative non-degeneracy estimate for solutions near their singular free boundary points.

\section{Mathematical set-up}  \label{sct math setup}

In this section we give a precise description of the minimization problem considered in this article and gather some of the main known results about elliptic equations involving $A_2$ weights, required in our study.

Given an open set $\Omega \subset \mathbb{R}^d$, we denote by $\mathcal{M}(\Omega)$ the set of all real-valued measurable functions defined on $\Omega$. A nonnegative locally integrable function $\omega \colon \Omega \to \mathbb{R}$ is said to be an $A_2$ weight if $\omega^{-1}$ is also locally integrable and  
\begin{equation}\label{A2 cond}
	\sup_{B \subset \Omega } \Big ( \frac{1}{|B|}\int_B \omega \Big ) \Big (\frac{1}{|B|}\int_B \omega^{-1} \Big ) \leq C_1,
\end{equation}
holds for a constant $C_1>0$ and any ball $B\subset \Omega$. Two weights are said to belong to the same $A_2$ class if condition \eqref{A2 cond} is verified for both functions with the same constant $C_1$.

For an $A_2$ weight $\omega$ and $1\le p < \infty$, we define
$$
	L^p(\Omega, \omega) = \left \{ f \in \mathcal{M}(\Omega) \big | ~ \|f\|_{L^p(\Omega, \omega)} := \left ( \int_{\Omega} |f(x)|^p \omega(x) dx \right )^{1/p} < \infty \right \}.
$$ 
Accordingly, we define the weighted Sobolev space as
$$
	W^{1,p}(\Omega, \omega) := \left \{ u \in L^p(\Omega, \omega) \big | ~ D_i u \in L^p(\Omega, \omega), \text{ for } i=1,2,\cdots d \right \},
$$
and, by convention, we write $W^{1,2}(\Omega, \omega)$ as $H^1(\Omega, \omega)$.

An $A_2$ weight $\omega$ gives raise to the degenerate/singular elliptic operator 
$$
	L_\omega (\cdot)  = \div(\omega \nabla \cdot),
$$ 
which acts on $H^1(\Omega, \omega)$.  In a series of three papers, \cite{FKS,FJK, FJK2}, 
Fabes, Jerison, Kenig, and Serapioni developed a systematic theory for this class
of operators: existence of weak solutions, Sobolev embeddings, Poincar\'e inequality, Harnack
inequality, local solvability in H\"older spaces, and estimates on the
Green's function. Below we state four results from \cite{FKS}, see also \cite{HKM} supporting, directly or indirectly, the framework developed in this present article.

\begin{theorem} [Solvability in Sobolev spaces] \label{solveFKS}
Let $\Omega\subset\RR^{d}$ be a smooth bounded domain, 
$h=(h_1,...,h_{n})$ satisfying $|h|/\omega \in L^2(\Omega,\omega)$, and 
$g \in H^1(\Omega,\omega)$. Then, there exists a unique solution  
$u\in H^1(\Omega,\omega)$ of 
$L_\omega u=-{\rm div }\, h$ in $\Omega$ with $u-g \in H^1_0(\Omega,\omega)$.
\end{theorem}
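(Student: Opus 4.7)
The plan is to reduce to homogeneous boundary data by the substitution $v := u - g$ and then apply the Lax--Milgram theorem on the Hilbert space $H^1_0(\Omega, \omega)$. The reformulated problem is: find $v \in H^1_0(\Omega, \omega)$ such that
$$a(v,\phi) \,:=\, \int_\Omega \omega\,\nabla v\cdot\nabla\phi\, dx \,=\, F(\phi) \qquad \text{for every } \phi \in H^1_0(\Omega,\omega),$$
where $F(\phi) := -\int_\Omega h\cdot\nabla\phi\, dx - \int_\Omega \omega\,\nabla g\cdot\nabla\phi\, dx$ encodes the source term and the non-homogeneous boundary data. Once such a $v$ is produced, $u := v+g$ is the desired solution and it belongs to the affine class $g + H^1_0(\Omega,\omega)$ by construction.

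Continuity of the bilinear form $a$ on $H^1_0(\Omega,\omega)$ is immediate from Cauchy--Schwarz in $L^2(\Omega,\omega)$: $|a(v,\phi)|\le \|\nabla v\|_{L^2(\Omega,\omega)}\|\nabla\phi\|_{L^2(\Omega,\omega)}$. Continuity of $F$ follows by factoring the weight as $\omega = \sqrt{\omega}\cdot\sqrt{\omega}$ and applying Cauchy--Schwarz:
$$\left|\int_\Omega h\cdot\nabla\phi\, dx\right| \,\le\, \Big(\int_\Omega \frac{|h|^2}{\omega}\, dx\Big)^{1/2}\Big(\int_\Omega \omega|\nabla\phi|^2\, dx\Big)^{1/2} = \bigl\||h|/\omega\bigr\|_{L^2(\Omega,\omega)}\,\|\nabla\phi\|_{L^2(\Omega,\omega)},$$
which is finite by the hypothesis on $h$. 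The analogous bound for the term involving $\nabla g$ is direct Cauchy--Schwarz in $L^2(\Omega,\omega)$, yielding $\|\nabla g\|_{L^2(\Omega,\omega)}\|\nabla\phi\|_{L^2(\Omega,\omega)}$, which is finite since $g\in H^1(\Omega,\omega)$.

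The main obstacle is coercivity. Since $a(v,v) = \|\nabla v\|_{L^2(\Omega,\omega)}^2$, coercivity on $H^1_0(\Omega,\omega)$ reduces to a weighted Poincar\'e inequality of the form
$$\|v\|_{L^2(\Omega,\omega)} \,\le\, C(\Omega)\,\|\nabla v\|_{L^2(\Omega,\omega)} \qquad \text{for all } v\in H^1_0(\Omega,\omega).$$
This is precisely the nontrivial inequality for $A_2$ weights established by Fabes--Kenig--Serapioni \cite{FKS}; it is here that the $A_2$ hypothesis enters essentially, via the doubling and self-improving integrability properties of $\omega$. With coercivity in hand, Lax--Milgram produces a unique $v\in H^1_0(\Omega,\omega)$ with $a(v,\cdot)=F(\cdot)$, and the uniqueness of $u$ in the affine class $g+H^1_0(\Omega,\omega)$ follows from the nondegeneracy of $a$ on $H^1_0(\Omega,\omega)$, since two solutions differ by an element on which $a$ vanishes.
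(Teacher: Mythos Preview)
The paper does not actually prove this theorem; it is quoted verbatim as one of four background results from \cite{FKS} (see the sentence preceding Theorem~\ref{solveFKS}), so there is no in-paper proof to compare against. Your Lax--Milgram argument is correct and is in fact the standard route taken in \cite{FKS}: the continuity estimates are exactly as you write them, and the coercivity step is precisely the weighted Poincar\'e inequality recorded here as Theorem~\ref{PoincFKS}.
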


\begin{theorem} [Local H\"older regularity]\label{HolderFKS}
Let $h$ be a weak solution of $L_\omega h= 0$ in 
$B_1 \subset \mathbb{R}^d$.  Then, $u$ is H\"older continuous in $B_{1/2}$ with a H\"older exponent $\mu$ depending only on $d$ and the $A_2$ class of $\omega$. Furthermore
$$
	\|h\|_{C^\mu(B_{1/2})} \le M \left ( \frac{1}{\omega(B_1)} \int h^2(x) \omega(x) dx \right )^{1/2},
$$
for a constant $M$ also depending only on $d$ and the $A_2$ class of $\omega$.
\end{theorem}

\begin{theorem} [Harnack inequality] \label{HarnackFKS}
Let $u$ be a positive solution of $L_\omega u=0$ in 
$B_{4R}(x_0)\subset\RR^{d}$. Then, $\sup_{B_R(x_0)} u \leq C \inf_{B_R(x_0)} u$
for some constant $C$ depending only on $d$ and the $A_2$ class of $\omega$ --- and in particular, independent of~$R$.
\end{theorem}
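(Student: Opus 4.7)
The plan is to adapt the classical Moser iteration scheme to the weighted measure $d\mu = \omega\,dx$, exploiting the fact that the $A_2$ condition makes $\mu$ a doubling measure which supports a $(2,2)$-Poincar\'e inequality. The proof proceeds in three main blocks: an energy-based $L^p$-to-$L^\infty$ iteration for positive solutions, the same iteration applied to negative powers to control the infimum from below, and a logarithmic estimate that bridges small positive and small negative exponents. The scale-invariance of both the $A_2$ condition and the Poincar\'e/Sobolev inequalities with respect to $\omega\,dx$ is what delivers the crucial feature that the final constant does not depend on $R$.

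First I would establish a weighted Caccioppoli estimate: inserting the test function $\phi = \eta^{2} u^{\beta-1}$ (for $\beta \neq 1$) into the weak formulation $\int_{B_{4R}} \omega \nabla u \cdot \nabla \phi \, dx = 0$ and rearranging gives, for a cutoff $\eta$ supported in a ball $B_{r'}$ and equal to $1$ on $B_r \subset B_{r'}$,
\begin{equation*}
  \int \omega\, \eta^{2} |\nabla u^{\beta/2}|^{2}\, dx \;\le\; C \frac{\beta^{2}}{(\beta-1)^{2}}\, \frac{1}{(r'-r)^{2}} \int \omega\, u^{\beta}\, dx .
\end{equation*}
Coupling this with the weighted Sobolev inequality of Fabes--Kenig--Serapioni, which asserts the existence of some $\kappa = \kappa(d,[\omega]_{A_2}) > 1$ with
\begin{equation*}
  \Big( \intav{B}\omega |f|^{2\kappa} \, dx \Big)^{1/\kappa} \;\le\; C R^{2} \intav{B} \omega |\nabla f|^{2}\, dx + C \intav{B}\omega |f|^{2}\, dx
\end{equation*}
on every ball $B$ of radius $R$, yields the reverse inclusion $\|u^{\beta/2}\|_{L^{2\kappa}(\omega, B_r)} \lesssim (r'-r)^{-1} \|u^{\beta/2}\|_{L^{2}(\omega,B_{r'})}$. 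Iterating over a geometric sequence of radii and exponents $\beta_k = \kappa^k \beta_0$ in the Moser fashion, one obtains $\sup_{B_R} u \le C \Big(\intav{B_{2R}} u^{p}\omega\,dx\Big)^{1/p}$ for any $p>0$, and the analogous argument with negative exponents $\beta < 0$ (which handles $u^{-1}$ in place of $u$) gives $\Big(\intav{B_{2R}} u^{-p}\omega\,dx\Big)^{-1/p}\le C \inf_{B_R} u$ for any $p>0$.

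The final and most delicate step is to connect these two $L^p$ quantities, since the Moser iteration degenerates as $\beta \to 1$. Here one tests with $\phi = \eta^{2}/u$ to derive a Caccioppoli bound for $\log u$:
\begin{equation*}
  \int \omega\, \eta^{2} |\nabla \log u|^{2}\, dx \;\le\; C \int \omega\, |\nabla \eta|^{2}\, dx ,
\end{equation*}
which combined with the weighted Poincar\'e inequality of FKS shows that $\log u$ lies in $\mathrm{BMO}(\omega\,dx)$ on $B_{2R}$ with norm independent of $R$. The John--Nirenberg inequality, valid in the doubling space $(\Omega, \omega\,dx)$, then provides a small exponent $p_0 > 0$ with
\begin{equation*}
  \Big(\intav{B_{2R}} u^{p_0}\omega\,dx\Big)\Big(\intav{B_{2R}} u^{-p_0}\omega\,dx\Big) \le C,
\end{equation*}
and chaining the three estimates gives $\sup_{B_R} u \le C \inf_{B_R} u$. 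The main obstacle I expect is precisely the verification that the weighted Poincar\'e and Sobolev inequalities hold with geometry-independent constants in the $A_2$ setting, and that the John--Nirenberg step can be performed intrinsically with respect to the measure $\omega\,dx$; once those ingredients from \cite{FKS} are granted, the remainder is a careful but routine Moser-type iteration.
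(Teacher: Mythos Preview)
The paper does not prove this statement at all: Theorem~\ref{HarnackFKS} is quoted verbatim from \cite{FKS} as background, alongside the companion results on solvability, H\"older regularity, and Poincar\'e's inequality, and no argument is given in the paper itself. Your outline is essentially the original Fabes--Kenig--Serapioni proof (Moser iteration plus weighted Sobolev/Poincar\'e plus the $\log u$--BMO/John--Nirenberg bridge), so there is nothing to compare against here; what you wrote is correct in spirit and matches the source the authors are citing.
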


\begin{theorem}[Poincar\'e inequality] \label{PoincFKS} There is as positive constant $C>1$ such that for all function $f$ in $H^1(B_R, \omega)$ satisfying $f = 0$ on $\partial B_R$, the following holds
$$
	 \int_{B_R}|f|^2 \omega dx   \leq C R^2  \int_{B_R}|\nabla f|^2 \omega dx.
$$ 
\end{theorem}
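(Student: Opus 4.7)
The plan is to establish a pointwise bound $|f(x)| \leq CR\cdot M(|\nabla f|)(x)$ in terms of the Hardy--Littlewood maximal function $M$, and then invoke Muckenhoupt's weighted $L^2$ inequality for $M$. I would first reduce by density to the case $f \in C_c^\infty(B_R)$, extended by zero to all of $\RR^d$; the rescaling $\tilde f(x) := f(Rx)$, $\tilde\omega(x):=\omega(Rx)$ preserves the $A_2$ condition with the same constant and accounts for the $R^2$ factor, so we may further reduce to $R=1$ and recover the general statement at the end by undoing the scaling.

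For the pointwise bound, fix $x \in \RR^d$ and note that for any unit vector $\xi \in S^{d-1}$ the fundamental theorem of calculus gives $f(x) = -\int_0^\infty \xi\cdot\nabla f(x+t\xi)\,dt$. Averaging over $\xi \in S^{d-1}$ and switching to Cartesian coordinates via $y = x + t\xi$, $dy = t^{d-1}\,dt\,d\sigma(\xi)$, yields the Riesz-potential representation
$$
	|f(x)| \leq \frac{1}{|S^{d-1}|}\int_{\RR^d}\frac{|\nabla f(y)|}{|y-x|^{d-1}}\,dy.
$$
Since $|\nabla f|$ is supported in $B_R$, a dyadic decomposition of this integral into annuli $\{2^{-k-1}(2R) \leq |y-x| < 2^{-k}(2R)\}$ bounds each piece by $C\,2^{-k}R \cdot M(|\nabla f|)(x)$, and summing the geometric series in $k$ produces the desired estimate $|f(x)| \leq CR\cdot M(|\nabla f|)(x)$.

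Squaring, integrating against $\omega$, and applying Muckenhoupt's weighted maximal theorem --- which asserts that $M : L^2(\omega)\to L^2(\omega)$ is bounded for any $A_2$ weight with constant depending only on $d$ and the $A_2$ class --- then yields
$$
	\int_{B_R}|f|^2\omega\,dx \leq CR^2\int_{\RR^d}M(|\nabla f|)^2\omega\,dx \leq CR^2\int_{B_R}|\nabla f|^2\omega\,dx.
$$
The main technical subtlety is applying Muckenhoupt's inequality in our setting, where $\omega$ is only assumed $A_2$ on $\Omega$: one either works with a local maximal operator restricted to balls contained in $\Omega$, or first extends $\omega$ to a globally $A_2$ weight on $\RR^d$ via a standard truncation and reflection construction. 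Either route preserves the relevant $A_2$ constant up to a dimensional factor, completing the proof.
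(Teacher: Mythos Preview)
The paper does not prove this statement: it is one of four background results quoted without proof from Fabes--Kenig--Serapioni \cite{FKS} (see the sentence preceding Theorem~\ref{solveFKS}). So there is no in-paper argument to compare against.

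That said, your route is a correct and standard one, and in fact close in spirit to the original FKS argument: the sub-representation formula $|f|\lesssim I_1(|\nabla f|)$ for compactly supported $f$, the truncated Hedberg bound $I_1 g(x)\lesssim R\cdot Mg(x)$ when $g$ is supported in a ball of radius $R$, and then Muckenhoupt's characterization of $A_2$ via $L^2(\omega)$-boundedness of the maximal operator. The scaling reduction and the dyadic computation are both done correctly.

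The only soft spot is the one you already identify: $\omega$ is assumed $A_2$ only on balls contained in $\Omega$, whereas Muckenhoupt's theorem is usually stated for global weights. Your suggested fix via ``truncation and reflection'' is not as automatic as you make it sound---extending a local $A_2$ weight to a global one is delicate in general. The cleaner alternative you mention first is the right one: assume, say, $B_{3R}\subset\Omega$ (harmless for all later uses in the paper, which apply Poincar\'e on small balls compactly contained in the domain), and work with the local maximal operator $M_\Omega g(x)=\sup_{B\ni x,\ B\subset\Omega}\intav{B}|g|$. The Calder\'on--Zygmund proof of Muckenhoupt's inequality goes through verbatim for $M_\Omega$ using only the $A_2$ condition on balls inside $\Omega$, and all balls arising in your dyadic decomposition have radius at most $2R$ and are centered in $B_R$, hence lie in $B_{3R}\subset\Omega$. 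With that adjustment the argument is complete.
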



Let us now turn to the mathematical description of the problem to be studied in the present article. Given a nonnegative boundary datum $f \in H^1(\Omega, \omega) \cap L^\infty(\Omega)$, we consider the minimization problem
\begin{equation}\label{degFB}
	\J(\omega, u, \Omega)= \int_\Omega  \left (\omega (x) |\nabla u |^2+\chi_{\{u>0\}} \right ) dx  \longrightarrow \text{min},
\end{equation}
among functions $u \in H^1_f(\Omega, \omega):=f+H^1_0(\Omega,\omega)$, where $\chi_{O}$ stands for the characteristic function of the set $O$ and $\omega$ is an $A_2$ weight. 

We are mostly interested in local geometric properties of local minima near a singular free boundary point. Henceforth, as to properly carry out the analysis, it is convenient to localize the problem into the unit ball $B_1$ and assume the origin is a free boundary point, i.e., $0 \in \partial \{u>0\}.$ In addition we shall assume throughout the paper the following structural condition on the weight $\omega$:
\begin{itemize}

	\item[(H1)] The weight $\omega$ belongs to  $A_2$-class and $0 < \tau_0 \le \omega \le +\infty$  a.e. in $\Omega$.
\end{itemize}

It turns out that (H1) prescribes minimal condition under which one can develop an existence and regularity theory for corresponding singular cavitation problem. 

 We comment that we have chosen to develop the analysis of problems involving singular weights, rather than degenerate ones. In turn, we consider strictly positive weights that may blow-up along its singular set $\Lambda_\infty(\omega)$. We could similarly treat bounded, degenerate weights of order $\gtrsim r^{\sigma}$, for some $\sigma < 2$.  

A number of physical free boundary problems fall into the above mathematical set-up. Typical examples of weight functions we have in mind are 
$$
	\omega(x) = |x'|^\alpha, 
$$
where $x=(x', x_{m+1}, \cdots, x_d)$, for $0\le m< d$ and $-m< \alpha \le 0$.  More generally, if $\mathcal{N}$ is an $m$-dimensional manifold properly embedded in $\mathbb{R}^d$, $0\le m< d$,  we are interested in weights of the form
$$
	\omega(x) = \text{dist}(x, \mathcal{N})^\alpha,
$$
for some $-m< \alpha \le 0$. This class of weight functions gives raise to the analysis of free boundary problems ruled by diffusion operators with an $m$-dimensional singular set. Anisotropic weights of the form
$$
	\omega(x) := \prod_{i=1}^d | x_i|^{\alpha_i}, 
$$
also fall under the hypothesis considered in this work. In this case, condition (H1) is verified as long as $-1< \alpha_i\le 0$.

We are further interested in weights with possible distinct behaviors along sets of different dimensions, say 
$$
	\omega(x) = |(x_1, x_2, \cdots, x_m)|^{\alpha_1} \cdot |(x_{m+1}, \cdots, x_d)|^{\alpha_2},
$$ 
where $-m<\alpha_1 \le 0$, $-d+m< \alpha_2 \le 0$. In this model,  $0$ is a singular free boundary point of degree $|\alpha_1 + \alpha_2|$, as we shall term later. If we label the cones $\mathscr{C}_1 := \left \{ \Pi_{i=1}^m x_i = 0  \right \} $ and $\mathscr{C}_2 := \left \{ \Pi_{i=m+1}^d x_i = 0  \right \} $, then any free boundary point in $\mathscr{C}_1 \setminus \mathscr{C}_2$ is singular of degree $|\alpha_1|$ and similarly, a point in $\partial \{ u > 0 \} \cap \left (\mathscr{C}_2 \setminus \mathscr{C}_1 \right )$ is  singular of degree $|\alpha_2|$.  Any other free boundary point $z \in \partial \{u> 0 \} \setminus \left ( \mathscr{C}_1  \cup \mathscr{C}_2 \right )$ is a free boundary point of degree $0$.  

Notice that no continuity assumption has been required on $\omega$. In particular, we are interested in the analysis of free boundary problems in possibly random media. Given a measurable function $0 < c_0 \le \theta < c_0^{-1}$ defined on the unit sphere $\mathbb{S}^{d-1}$, $0\le m< d-2$ and $-m< \alpha \le 0$, we can always define an $\alpha$-homogeneous, $A_2$ weight function $\omega$ in $B_1 \setminus \{0\}$ as
$$
	\omega(x) := |x^\prime |^\alpha \cdot  \theta \left ( \frac{x}{|x|} \right ),
$$
where, as above, $x=(x', x_{m+1}, \cdots, x_d)$.
This is another important example of weights we have in mind as to motivate this work.

 We conclude this section by setting a nomenclature convention: hereafter any constant that depends only upon dimension, $d$, and $\omega$ will be called universal.  

\section{Existence and local boundedness} \label{sct Exist and Linfty bounds}

In this preliminary section, we discuss existence and local boundedness of minimizers. We also comment on the Euler-Lagrange equation satisfied by a local minimum. The proofs follow somewhat classical arguments, thus we simply sketch them here for sake of completeness. 

Given a nonnegative boundary datum $f \in H^1(\Omega,\omega)$, one can consider a minimizing sequence $v_j \in H_f^1(\Omega,\omega)$. Clearly,
$$
	C > \J (v_j) \ge \int_{\Omega} \omega (x) |\nabla v_j|^2 dx.
$$
Thus, up to a subsequence, $v_j \to u$ weakly in $H^1(\Omega,\omega)$, for some function $u\in H_f^1(\Omega,\omega)$. By compactness embedding, $v_j \to u$ in $L^2(\Omega, \omega)$ and  $v_j \to u$ a.e. in $\Omega$. Passing to the limit as $j \to \infty$ (see \cite[Section 1.3]{AC} to handle the term $\chi_{\{v_{j}>0\}}$), one concludes 
$$
	\J(u) \le \liminf\limits_{j\to \infty} \J (v_j) = \min \J.
$$
This shows the existence of a minimizer. To verify that $u$ is nonnegative, one simply compares $u$ with $u^{+}$ in the minimization problem.  Also, if the boundary datum $f$ is assumed to be in $H^1(\Omega,\omega) \cap L^\infty(\Omega)$, then for each $ |t| \ll 1$, the function $u + t \left (\|f\|_\infty - u\right )^{-}$ competes with $u$ in the minimization problem. Standard computations yield $\{ u > \|f\|_\infty\}$ has measure zero, that is, $\{0 \le u \le \|f\|_\infty\}$ has total measure in $\Omega$.

Now, if $\varphi$ is a nonnegative test function in $C_0^\infty(\Omega)$, then $u+\varphi$ competes with $u$ in the minimization problem. As  $\{ u+ \varphi > 0 \} \supset \{u > 0 \}$, there holds,
$$
	-2\int_{\Omega} \omega(x) \nabla u \cdot \nabla \varphi dx = \J(u+\varphi) - \J(u) - \int_{\Omega} \left ( \chi_{\{ u+ \varphi > 0 \}}  - \chi_{\{u > 0 \}}  \right ) dx \ge  0.
$$
This shows $\div \left (\omega(x) \nabla u \right )$ defines a nonnegative measure $\nu$.  If $B_\delta(x_0) \subset \{u > 0 \}$, then given a test function $\varphi \in C_0^\infty(B_\delta(x_0))$, for $0< |t| \ll 1$, $u+t \varphi$ is also positive in  $B_\delta(x_0) $, thus we conclude $\nu \equiv 0$ in the interior of  $ \{u > 0 \}$.

\smallskip

Let us gather the information delivered above and state as a theorem for future reference.

\begin{theorem}\label{ext}
Let $\omega$ be any $A_2$ weight and $f \in H^1(\Omega,\omega)$ nonnegative. Then there exists a minimizer $u \in H^1(\Omega,\omega)$ to problem \eqref{degFB} such that $u=f$ on $\partial \Omega$, in the trace sense. Furthermore, $u$ is nonnegative, $\|u\|_{L^\infty(\Omega)}\le  \|f\|_{L^\infty(\Omega)}$, and there exists a non-negative Radon measure $\nu$, supported on its free boundary $\partial \{u>0\}$, such that
$$
	\text{div} \left (\omega(x) \nabla u \right ) = \nu
$$ 
is verified in the distributional sense. 
\end{theorem}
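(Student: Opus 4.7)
The plan is to establish the four conclusions of the theorem sequentially, through the direct method of the calculus of variations followed by standard admissible perturbations around the minimizer.

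\emph{Existence.} I fix a minimizing sequence $\{v_j\}\subset H^1_f(\Omega,\omega)$. From $\int_\Omega\omega|\nabla v_j|^2\,dx\le\J(v_j)\le C$, combined with the weighted Poincar\'e inequality (Theorem \ref{PoincFKS}) applied to $v_j-f$, one derives a uniform $H^1(\Omega,\omega)$ bound; reflexivity together with compactness of the embedding $H^1(\Omega,\omega)\hookrightarrow L^2(\Omega,\omega)$ produces, along a subsequence, $v_j\rightharpoonup u$ weakly in $H^1(\Omega,\omega)$ and $v_j\to u$ a.e. The weighted Dirichlet part is weakly lower semicontinuous, and the indicator term is lower semicontinuous via the inclusion $\{u>0\}\subset\liminf_j\{v_j>0\}$ (modulo null sets) together with Fatou's lemma, as in \cite[Section 1.3]{AC}. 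The trace condition $u=f$ on $\partial\Omega$ passes to the limit.

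\emph{Nonnegativity and $L^\infty$ bound.} Since $f\ge 0$, the positive part $u^+\in H^1_f(\Omega,\omega)$ satisfies $|\nabla u^+|\le|\nabla u|$ and $\{u^+>0\}\subset\{u>0\}$, so $\J(u^+)\le\J(u)$; minimality forces $u=u^+$ a.e. For the $L^\infty$ bound, set $M:=\|f\|_{L^\infty(\Omega)}$ and compare $u$ with $\min(u,M)\in H^1_f(\Omega,\omega)$, which has the same positivity set as $u$ (since $M\ge 0$) and pointwise smaller gradient modulus, strictly so on $\{u>M\}$. Minimality then forces $|\{u>M\}|=0$.

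\emph{The Euler--Lagrange measure.} Given $\varphi\in C_c^\infty(\Omega)$ with $\varphi\ge 0$ and $t>0$ small, the competitor of choice is $(u-t\varphi)^+\in H^1_f(\Omega,\omega)$, admissible because $f-t\varphi=f\ge 0$ on $\partial\Omega$. Using that $\nabla u=0$ a.e.\ on $\{u=0\}$, direct expansion gives
\begin{equation*}
0\le\J\bigl((u-t\varphi)^+\bigr)-\J(u) = -2t\int_{\{u>t\varphi\}}\omega\nabla u\cdot\nabla\varphi\,dx + t^2\int_{\{u>t\varphi\}}\omega|\nabla\varphi|^2\,dx - R(t),
\end{equation*}
where $R(t):=|\{0<u\le t\varphi\}|+\int_{\{0<u\le t\varphi\}}\omega|\nabla u|^2\,dx\ge 0$. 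Dividing by $2t$ and sending $t\to 0^+$ yields $-\int_\Omega\omega\nabla u\cdot\nabla\varphi\,dx\ge 0$ for every nonnegative $\varphi$; hence $\nu:=\mathrm{div}(\omega\nabla u)$ is a nonnegative distribution on $\Omega$, thus, by Riesz representation, a nonnegative Radon measure. To locate its support, note that if $\overline{B_\delta(x_0)}\subset\{u>0\}$ and $\varphi\in C_c^\infty(B_\delta(x_0))$ has arbitrary sign, then $u\pm t\varphi$ are admissible with unchanged positivity set for $|t|$ small, and two-sided minimality gives $\int_\Omega\omega\nabla u\cdot\nabla\varphi\,dx=0$. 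Thus $\nu\equiv 0$ on $\{u>0\}$ and $\mathrm{supp}(\nu)\subset\partial\{u>0\}$.

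\emph{Main obstacle.} The principal technical delicacy lies in the lower semicontinuity of the discontinuous indicator term $\int_\Omega\chi_{\{v_j>0\}}\,dx$, which fails under mere weak $H^1$ convergence and must be recovered via the a.e.\ convergence of $\{v_j\}$ together with Fatou's lemma. A second, subtler issue is the choice of competitor for the measure equation: the naive upward perturbation $u+t\varphi$ produces an indicator-jump term equal to $|\{u=0\}\cap\{\varphi>0\}|$, which is constant in $t$ and therefore uninformative after division by $t$; the truncated downward perturbation $(u-t\varphi)^+$ places this jump on the favorable side of the inequality and is the key to extracting the correct sign of $\nu$.
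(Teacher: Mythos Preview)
Your argument is correct and tracks the paper's own proof almost step for step: direct method plus lower semicontinuity (with the same appeal to \cite[Section~1.3]{AC} for the indicator term), comparison with $u^{+}$ for the sign, a truncation competitor for the $L^\infty$ bound, and a one-sided perturbation for the measure identity. The one substantive variation is your choice of competitor in the last step: the paper uses the upward perturbation $u+\varphi$, while you use the downward truncation $(u-t\varphi)^{+}$; your observation that the upward choice leaves the indicator jump on the unhelpful side is well taken, and your competitor is the cleaner one.
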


\section{Universal Continuity} \label{sct comp}

In this section we establish that local minima of $\J$ are locally H\"older continuous, with universal bounds. In particular, any family of bounded local minima is pre-compact in the uniform convergence topology. 
Hereafter in this section we shall use the notation
\begin{equation}\label{notation avr}
	\intav{B_r(y))} f(x) \omega(x) dx := \frac{1}{\omega\left (B_r(y) \right )} \int_{B_r(y)} f(x) \omega(x) dx.
\end{equation}

We also write, for a measurable set $A$, $\omega(A) := \int_A \omega(x) dx$. Recall $A_2$-weights are doubling measures, i.e. there exists $D>0$ such that
\begin{equation}\label{doubling}
	0< \omega(B_{2r}(y)) \le D  \omega(B_{r}(y)),
\end{equation}
for all $y$ and $r>0$, see for instance \cite{HKM}.

We start off by investigating the scaling of the problem. Let $u$ be a local minimizer of 
$$
	\J(\omega, u, B_1) := \int_{B_1} \omega(x) |\nabla u (x)|^2+\chi_{\{u>0\}} dx.
$$
For $x_0 \in B_{1/2}$, $0< \rho < 1/2$ and $t>0$, define the re-scaled function $\tilde{u} \colon B_1 \to \mathbb{R}$ by
$$
	\tilde{u}(y) = t \cdot u(x_0 + \rho y).
$$
If we define the map $\Phi \colon B_1 \to B_{\rho}(x_0)$ as $z= \Phi(y) =  x_0 + \rho y$, for a subdomain $ \Om \subset \Phi(B_1)$, change of variables yields 
\begin{eqnarray*}
	\J(\omega, u, \Om)&=&\int_{\Phi^{-1}( \Omega)}\left [ \omega(x_0 + \rho y) |\nabla u(x_0 + \rho y)|^2+\chi_{ \{u(x_0 + \rho y)>0\}}\right] \rho^d dy\\
	&=&\int_{\Phi^{-1}(\Omega)} \left[\omega (x_0 + \rho y) t^{-2} \rho^{-2} |\nabla \tilde{u} |^2+\chi_{\{\tilde{u} >0\}} \right]\rho^d dz \\
	&=& \rho^{d-2} t^{-2} \int_{\Phi^{-1}(\Omega)} \left[{\omega} (x_0 + \rho y)  |\nabla \tilde{u} |^2+t^{2} \rho^{2} \chi_{\{\tilde{u} >0\}} \right] dz .
\end{eqnarray*}
Hence, we conclude that $\tilde{u}$ is a local minimizer of a functional $\tilde{\J}$ ruled by a weighted function $\tilde{\omega}$ in the same $A_2$ class of  $\omega$, but with jump forcing term $t^{2} \rho^{2} \chi_{\{\tilde{u} >0\}}$. 

Given a local bounded minimizer $u$ of $\J$ in $B_1$, for a fixed $0< \epsilon < 1$, select 
$$
	\rho = \sqrt{\epsilon \intav{B_1} u^2 \omega dx} \quad \text{and in the sequel} \quad t = \rho^{-1} \sqrt{\epsilon}.
$$
The corresponding scaled function $\tilde{u}$ then verifies $ \intav{B_1} \tilde{u}^2  \omega dx  \le 1$ and is a local minimizer of 
$$
	{\J}^\epsilon (v) := \int \tilde{\omega}(x) |\nabla v (x)|^2+ \epsilon \chi_{\{v>0\}} dx,
$$
where $\tilde{\omega}(x)$ lies in the same $A_2$ class as $\omega$. 

Notice that proving H\"older continuity for $u$ is equivalent to proving $\tilde{u}$ is  H\"older continuous; any estimate proven for $\tilde{u}$ yields a corresponding one for $u$, with proper constants adjustments, depending only on the choices for $t$ and $\rho$. In particular, should these choices be universal, the corresponding result for $u$ will be also universal. 

The above discussion motivates the following result, which measures how much a local minimizer of a functional with small jumping force deviates from its $\omega$-harmonic replacement. We shall need to work with a slightly more general class of operators.

\begin{proposition} \label{Closeness}
Let $\beta \colon \mathbb{R} \to [0,1]$ be a measurable function and $u$ be a local minimizer of 
$$
	\J^{\epsilon, \beta}(v) =  \int \omega(x) |\nabla v (x)|^2+ \epsilon  \beta({v}) dx,
$$
where $\omega$ is in the $A_2$-class. Let $h$ be the unique weak solution of  
$$
	\left \{
		\begin{array}{rlll}
			\text{div}\left ( \omega(x) \nabla h \right  ) &=& 0 &\text{ in } B_{R}(y)\\
			h & =& u & \text{ on }  \partial B_{R}(y),
		\end{array}
	\right.
$$
where $y\in B_{1/2}$ and $0<R<1/2$. Then 
$$
	\displaystyle  \int_{B_R(y)} \omega(x) \left |\nabla u(x) -  \nabla h(x) \right |^2 dx  \le  \epsilon R^d.
$$
\end{proposition}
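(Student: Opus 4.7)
The plan is to use minimality directly, with $h$ (extended by $u$ outside the ball) as a competitor, and then to split the gradient square difference into a perfect square plus an orthogonal cross term that vanishes by $\omega$-harmonicity.

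First, Theorem~\ref{solveFKS} guarantees that the $\omega$-harmonic replacement $h \in H^1(B_R(y), \omega)$ with $h - u \in H^1_0(B_R(y), \omega)$ exists. Extending $h$ by $u$ outside $B_R(y)$ yields an admissible competitor for $u$, so minimality of $u$ against this competitor reads
$$
\int_{B_R(y)} \omega(x) |\nabla u|^2 \,dx + \epsilon \int_{B_R(y)} \beta(u)\,dx \;\le\; \int_{B_R(y)} \omega(x) |\nabla h|^2 \,dx + \epsilon \int_{B_R(y)} \beta(h)\,dx.
$$

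Next, I would use the algebraic identity
$$
|\nabla u|^2 - |\nabla h|^2 \;=\; |\nabla(u-h)|^2 + 2\,\nabla h \cdot \nabla(u-h),
$$
multiply by $\omega$, and integrate over $B_R(y)$. The crucial point is that the cross term vanishes: since $u - h \in H^1_0(B_R(y), \omega)$ is an admissible test function for the weak formulation of $\mathrm{div}(\omega \nabla h) = 0$, we have
$$
\int_{B_R(y)} \omega(x)\, \nabla h \cdot \nabla(u-h)\,dx \;=\; 0.
$$

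Combining these two facts rearranges the minimality inequality to
$$
\int_{B_R(y)} \omega(x)\, |\nabla u - \nabla h|^2 \,dx \;\le\; \epsilon \int_{B_R(y)} \bigl( \beta(h) - \beta(u) \bigr) dx \;\le\; \epsilon \, |B_R(y)|,
$$
where the last step uses the hypothesis $0 \le \beta \le 1$. Absorbing the volume of the unit ball into the stated $R^d$ bound yields the conclusion.

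The only mild obstacle is justifying the orthogonality identity in the weighted setting, but this is immediate from the weak Euler--Lagrange equation for $h$ applied to the test function $u-h \in H^1_0(B_R(y),\omega)$, which is legitimate because both $u$ and $h$ lie in $H^1(B_R(y),\omega)$. Everything else is a standard application of minimality in the spirit of Alt--Caffarelli, adapted to the $A_2$-weighted space via Theorem~\ref{solveFKS}.
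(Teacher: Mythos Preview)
Your argument is correct and follows exactly the paper's own proof: compare $u$ against its $\omega$-harmonic replacement $h$ by minimality, then use the weak equation for $h$ tested against $u-h\in H^1_0(B_R(y),\omega)$ to turn the difference of Dirichlet energies into $\int \omega|\nabla(u-h)|^2$. The paper likewise absorbs the factor $|B_1|$ into the stated $R^d$ bound without comment, so your remark about the unit-ball volume is even slightly more careful than the original.
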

\begin{proof}
Since $h$ competes with $u$ in the minimization problem, 
$$
	\J^{\epsilon, \beta}(u) \le \J^{\epsilon, \beta}(h),
$$
which yields 
$$
	\int_{B_{R}(y)} \omega(x) \left ( |\nabla u(x)|^2 - |\nabla h(x)|^2  \right ) dx \le \epsilon \int_{B_{R}(y)} \left (   \beta({h}) - \beta (u) \right ) dx  \le \epsilon R^d.
$$
Now, from the PDE satisfied by $h$, we have
\begin{equation}\label{thm reg0 eq3}
	\int_{B_{R}(y)} \omega(x)  \nabla h \cdot \nabla (h-u) dx = 0,
\end{equation}
hence,
\begin{equation}\label{thm reg0 eq4} 
	\int_{B_{R}(y)} \omega(x) \left |\nabla u - \nabla h \right |^2 dx =  \int_{B_{R}(y)} \omega(x) \left ( |\nabla u|^2 -  |\nabla h|^2 \right ) dx,
\end{equation}
and the Proposition is proven.
\end{proof}

 Finally from Theorem \ref{HolderFKS}, $\omega$-harmonic functions are locally H\"older continuous. More precisely, if $h$ is a weak solution of $\text{div} (\omega(x) \nabla h) = 0$ in $B_{1/2}$, and say 
$$
	\left ( \intav{B_{1/2}} h^2(x) \omega(x) dx \right )^{1/2} \le 1
$$ 
then for some $0<\mu < 1$ and $\Lambda$, depending only on dimension and $A_2$ class of $\omega$  there holds
\begin{equation} \label{Plain Holder}
	\|h\|_{C^{0,\mu}(B_{1/4})} \le \Lambda.
\end{equation}

In what follows we shall argue along the lines of \cite{T2.5}. 

\begin{lemma} \label{1st step} Assume (H1). There exist universal constants $0<\epsilon<1 $, $0<\lambda<1/4$ and $ |a_0| < \Lambda$ such that if $u$ is a local minimizer of $\J^{\epsilon, \beta}$, satisfying $\intav{B_1} |u|^2\omega(x) dx \le 1$, then
$$
	  \intav{B_\lambda} \left | u - a_0 \right |^2 \omega(x) dx \le \lambda^\mu.
$$
\end{lemma}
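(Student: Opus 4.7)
The plan is to follow the standard ``comparison with harmonic replacement plus iteration'' scheme from regularity theory: replace $u$ by its $\omega$-harmonic extension $h$ on a ball, use Proposition~\ref{Closeness} to bound the comparison error in terms of $\epsilon$, and then invoke the H\"older regularity \eqref{Plain Holder} for $h$ to close the estimate, tuning $\lambda$ and then $\epsilon$.

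More concretely, I would fix $R=1/4$ (so that $B_R(y)\subset B_1$) and let $h\in H^1(B_R(y),\omega)$ be the weak solution of $\div(\omega\nabla h)=0$ in $B_R(y)$ with $h=u$ on $\partial B_R(y)$. Proposition~\ref{Closeness} gives
$$
\int_{B_R(y)} \omega(x)\,|\nabla u-\nabla h|^2\,dx \;\le\; \epsilon R^d.
$$
Since $u-h\in H^1_0(B_R(y),\omega)$, the weighted Poincar\'e inequality (Theorem~\ref{PoincFKS}) yields $\int_{B_R(y)}\omega(x)|u-h|^2\,dx\le CR^{d+2}\epsilon$. Here is exactly where hypothesis (H1) is used: the lower bound $\omega\ge\tau_0$ converts this into the unweighted estimate
$$
\int_{B_R(y)} |u-h|^2\,dx \;\le\; \frac{C}{\tau_0}\,R^{d+2}\epsilon.
$$

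Next, I would verify that $h$ satisfies a universal $C^{0,\mu}$ estimate on, say, $B_{R/2}(y)$. Testing the equation for $h$ against $h-u$ and using $\omega\ge\tau_0$ together with the hypothesis $\intav{B_1}|u|^2\le 1$ gives a universal $L^2$ bound for $h$ on $B_R(y)$. A rescaling of \eqref{Plain Holder} then produces constants $\mu\in(0,1)$ and $\Lambda'$, depending only on $d$ and the $A_2$-class of $\omega$, with
$$
\|h\|_{L^\infty(B_{R/2}(y))}\le \Lambda',\qquad [h]_{C^{0,\mu}(B_{R/2}(y))}\le \Lambda'.
$$
Setting $a_0:=h(y)$ we have $|a_0|\le\Lambda'=:\Lambda$, and for $\lambda<R/2$,
$$
\intav{B_\lambda(y)} |h-a_0|^2\,dx \;\le\; \Lambda^2\lambda^{2\mu}.
$$

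Combining the two ingredients by the triangle inequality,
$$
\intav{B_\lambda(y)} |u-a_0|^2\,dx \;\le\; 2\intav{B_\lambda(y)}|u-h|^2\,dx+2\intav{B_\lambda(y)}|h-a_0|^2\,dx \;\le\; C_1\epsilon\lambda^{-d}+2\Lambda^2\lambda^{2\mu},
$$
where $C_1$ depends only on $d$, $\tau_0$, and the $A_2$-class of $\omega$. I would first choose $\lambda\in(0,1/4)$ so small that $2\Lambda^2\lambda^{2\mu}\le \tfrac12\lambda^\mu$ (i.e.\ $\lambda^\mu\le 1/(4\Lambda^2)$), and then select $\epsilon\in(0,1)$ so small that $C_1\epsilon\lambda^{-d}\le \tfrac12\lambda^\mu$, i.e.\ $\epsilon\le \lambda^{d+\mu}/(2C_1)$. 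With these universal choices, the lemma follows.

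The only mildly delicate point, and the one I expect to be the main technical obstacle, is ensuring that the H\"older constant for $h$ is genuinely universal (independent of $\epsilon$, $\beta$, and the particular minimizer). This requires checking that the universal $L^2$ bound for $u$ transfers to $h$ on $B_R(y)$ without constants that deteriorate; hypothesis (H1) is essential for this, since it allows us to pass freely between the weighted and unweighted norms and to apply \eqref{Plain Holder} after a simple scaling from $B_R(y)$ to $B_1$.
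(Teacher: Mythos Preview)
Your proposal is correct and follows essentially the same approach as the paper: replace $u$ by its $\omega$-harmonic extension $h$, use Proposition~\ref{Closeness} together with Poincar\'e and (H1) to control $\|u-h\|_{L^2}$ by $C\epsilon/\tau_0$, set $a_0=h(y)$, invoke the H\"older estimate \eqref{Plain Holder} for $h$, and then choose first $\lambda$ and then $\epsilon$ to absorb the two terms into $\lambda^\mu$. The only cosmetic differences are the radius of the replacement ball ($R=1/4$ versus $R=1/2$) and the way the universal $L^2$ bound on $h$ is secured---the paper makes a preliminary smallness choice on $\epsilon$ to force $\|h\|_{L^2}\le 2|B_1|$, whereas you deduce it from the closeness estimate; both are fine.
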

\begin{proof} Let $h$ be the $\omega$-harmonic replacement of $u$ in $B_{1/2}$. From Proposition \ref{Closeness} and Poincar\'e inequality, we can estimate
\begin{equation}
	\int_{B_{1/2}} \left | u - h \right |^2 \omega(x) dx \le  C\epsilon.
\end{equation}
Because $  \intav{B_1} |u|^2\omega(x) dx \le 1$, we can make a first choice on the smallness of $\epsilon$ to ensure 
$$
\intav{B_{1/2}} h^2(x) \omega(x) dx  \le  \sqrt{(D+1)},$$
 where $D>0$ is the doubling constant from \eqref{doubling}. Indeed,
$$
	\begin{array}{lll}
		\|h\|_{L^2(B_{1/2}, \omega)} &\le&    \|h-u\|_{L^2(B_{1/2}, \omega)} + \|u\|_{L^2(B_{1/2}, \omega)} \\
		&\le &  \sqrt{ C\epsilon} +\|u\|_{L^2(B_{1}, \omega)} \\
		&\le &  \sqrt{ C\epsilon} + \sqrt{\omega(B_1)} \\
		& \le &  \sqrt{ C\epsilon} + \sqrt{{D}\omega(B_{1/2})} \\
		& \le & \sqrt{{(D+1)}\omega(B_{1/2})},  
	\end{array}
$$ 
provided $\epsilon >0$ is small enough. We now set $a_0 = h(y)$ and, for a $0<\lambda < 1/4$ to be chosen, estimate
\begin{equation}
	\begin{array}{lll}
		\displaystyle   \intav{B_{\lambda}} \left | u - a_0 \right |^2 \omega(x) dx &\le& \displaystyle  2 \left (  \intav{B_{\lambda }} \left | h - h(y) \right |^2\omega(x) dx + \intav{B_{\lambda }} \left | u - h \right |^2 dx \right ) \\
		&\le& \displaystyle  2 \left (  \Lambda (D+1) \cdot \lambda^{2\mu} + \frac{C\epsilon{\omega(B_1})}{\omega(B_\lambda)} \right ).
	\end{array}
\end{equation}
Now we choose $0<\lambda  \ll 1$ so small that
$$
	\Lambda(D+1) \cdot \lambda^{2\mu} = \frac{1}{4} \lambda^\mu, 
$$
and in the sequel select $\epsilon>0$ such that
$$
	\frac{C\epsilon{\omega(B_1})}{\omega(B_\lambda)} =  \frac{1}{4} \lambda^\mu.
$$
Note these choices are universal. Lemma \ref{1st step} is proven.
\end{proof}

We are ready to prove universal H\"older continuity for local minimizers.

\begin{theorem}[Local $C^{0,\tau}$] \label{reg0} Let $u$ be local minimizer of $\J$ in $B_1$. There exist universal constants $0<\tau \ll 1$ and $C>1$  such that $u \in C^{0,\tau}(B_{1/2})$ and
$$
	\|u\|_{C^{0,\tau}(B_{1/2})} \le C\|u\|_{L^2(B_{1})}.
$$
\end{theorem}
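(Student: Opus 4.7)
The plan is a Campanato-type iteration of Lemma~\ref{1st step} at dyadic scales $\lambda^k$ centered at an arbitrary point $y_0\in B_{1/2}$, ultimately producing the H\"older exponent $\tau=\mu/2$, where $\mu$ is the exponent from \eqref{Plain Holder}. First, by the scaling computation carried out immediately before Proposition~\ref{Closeness}, I reduce to the case in which $u$ itself is a local minimizer of $\J^{\epsilon}$ on $B_1$ satisfying $\intav{B_1}u^2\,dx\le 1$, where $\epsilon$ is the universal threshold produced by Lemma~\ref{1st step}. Any estimate obtained in this normalized regime translates back to the original minimizer with universal constants.

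Fix $y_0\in B_{1/2}$. I construct inductively a sequence $\{a_k\}_{k\ge 0}\subset\R$ with $a_0=0$ satisfying
\[
\intav{B_{\lambda^k}(y_0)}|u-a_k|^2\,dx \le \lambda^{k\mu},\qquad |a_{k+1}-a_k|\le \Lambda\,\lambda^{k\mu/2}.
\]
The base step $k=0$ is Lemma~\ref{1st step} applied at $y_0$. For the inductive step, set
\[
u_k(z):=\lambda^{-k\mu/2}\bigl(u(y_0+\lambda^k z)-a_k\bigr),\qquad z\in B_1,
\]
so that $\intav{B_1}u_k^2\,dz\le 1$ by the inductive hypothesis. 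A change of variables mirroring the one preceding Proposition~\ref{Closeness} shows that $u_k$ is a local minimizer on $B_1$ of a functional of the form $\J^{\epsilon_k,\beta_k}(v)=\int \tilde\omega_k|\nabla v|^2 + \epsilon_k\,\beta_k(v)\,dz$, where $\tilde\omega_k(z)=\omega(y_0+\lambda^k z)$ lies in the same $A_2$-class as $\omega$ (and still satisfies $\tilde\omega_k\ge\tau_0$), $\beta_k(s)=\chi_{\{\lambda^{k\mu/2}s+a_k>0\}}\in[0,1]$, and $\epsilon_k=\epsilon\,\lambda^{k(2-\mu)}\le\epsilon$ thanks to $\mu<2$. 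Lemma~\ref{1st step} then applies to $u_k$ at the origin, producing $\tilde a^{(k)}$ with $|\tilde a^{(k)}|<\Lambda$ and $\intav{B_\lambda(0)}|u_k-\tilde a^{(k)}|^2\,dz\le \lambda^\mu$. Setting $a_{k+1}:=a_k+\lambda^{k\mu/2}\tilde a^{(k)}$ and scaling back closes the induction at level $k+1$.

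The telescoping bound $|a_{k+1}-a_k|\le\Lambda\lambda^{k\mu/2}$ shows $\{a_k\}$ is Cauchy with limit $a_\infty(y_0)$ satisfying $|a_\infty(y_0)-a_k|\le C\lambda^{k\mu/2}$; combining this with the oscillation decay yields, for every $y_0\in B_{1/2}$ and every $0<r<1/2$, the Campanato-type estimate
\[
\intav{B_r(y_0)}|u-a_\infty(y_0)|^2\,dx \le C\,r^{\mu}.
\]
The Campanato--Morrey characterization of H\"older spaces then delivers $u\in C^{0,\mu/2}(B_{1/2})$ with the $L^2(B_1)$-type universal bound claimed in the theorem.

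The main technical subtlety, and the reason Proposition~\ref{Closeness} and Lemma~\ref{1st step} were stated for the broader class of functionals $\J^{\epsilon,\beta}$ with arbitrary measurable $\beta\in[0,1]$, is that after subtracting the nonzero constant $a_k$, the jump term $\chi_{\{u>0\}}$ becomes $\chi_{\{u_k>-a_k\lambda^{-k\mu/2}\}}$, which is \emph{not} of the form $\chi_{\{\cdot>0\}}$. Without allowing this more general $\beta$, the iteration simply could not close. The remaining scaling facts---invariance of the $A_2$-class under translation and dilation, preservation of the lower bound $\omega\ge\tau_0$, and the key inequality $\epsilon_k\le\epsilon$ (which relies on $\mu<2$)---are routine.
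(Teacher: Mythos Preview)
Your proof is correct and follows essentially the same Campanato iteration of Lemma~\ref{1st step} carried out in the paper, including the key observation that the rescaled functions minimize a functional in the broader class $\J^{\epsilon',\beta}$ with $\epsilon'\le\epsilon$, so that the lemma can be reapplied. Your rescaling factor $\lambda^{-k\mu/2}$ is in fact the correct normalization to guarantee $\intav{B_1}u_k^2\,dz\le 1$ from the inductive estimate $\intav{B_{\lambda^k}(y_0)}|u-a_k|^2\le\lambda^{k\mu}$; the paper writes $\lambda^{-\mu}$ at this step, which is a slip.
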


\begin{proof} The idea is to iterate Lemma \ref{1st step}. Let $u$ be a minimizer of $\J^\epsilon$,  where $\epsilon> 0$ is the number from Lemma \ref{1st step}. Note that from the discussion at the beginning of this Section, proving H\"older continuity for a local minimizer of $\J^\epsilon$ yields corresponding estimates for minimizers of the original functional $\J$. 

Define $u_1 \colon B_1 \to \mathbb{R}$ by
$$
	u_1(x) := \frac{1}{\lambda^{\mu/2}}\left (  u( \lambda x) - a_0 \right ).
$$
From Lemma \ref{1st step},  $\intav{B_1} u_1^2 \omega(x)dx \le 1$ and arguing as at the beginning of this Section, we deduce $u_1$ is a local minimizer of
$$
	\tilde{J} (v) := \int \tilde{\omega} (x) \left | \nabla v \right |^2 + \epsilon \lambda^{2(1-\mu/2)}  \chi_{\{ v >  \lambda^{\mu/2} a_0 \}} dx,
$$
where $\tilde{\omega}$ is a weight in the same $A_2$ class as $\omega$ and satisfies (H1) with the same constant $\tau_0$. Because $0< \mu < 1$,  $\epsilon \lambda^{2(1-\mu/2)} < \epsilon$,  we can argue as in Lemma  \ref{1st step} to find another constant $|a_1| < \Lambda$, such that
$$
	\intav{B_\lambda} \left | u_1 - a_1 \right |^2 \omega(x) dx \le \lambda^\mu,
$$
which yields for $u$ the estimate
$$
	\intav{B_{\lambda^2} } \left | u - \left ( a_0 + \lambda^{\mu/2} a_1 \right ) \right |^2 \omega(x)dx \le \lambda^{2\mu}.	
$$
Arguing now by induction, we conclude 
\begin{equation}\label{ind1}
	\intav{B_{\lambda^n}} \left | u - a_{n} \right |^2 \omega(x) dx \le \lambda^{n \cdot \mu}.	
\end{equation}
where
$$
	a_{n} = a_0 + \lambda^{\mu/2} a_1 + \cdots + \lambda^{(n-1)\mu/2}  a_{n-1}.
$$
One easily verifies that the above is a Cauchy sequence, and therefore convergent. Let $a_\infty = \lim\limits_{n\to \infty} a_n$. We estimate
\begin{equation}\label{ind2}
	|a_\infty - a_n| \le \lim\limits_{m\to \infty} \Lambda \cdot \sum\limits_{i=n}^m \lambda^{{(\mu/2)} i} \le \frac{\Lambda}{1-\lambda^{\mu/2}}  \cdot \lambda^{n\mu/2} 
\end{equation}
We can therefore estimate, from \eqref{ind1} and \eqref{ind2}
\begin{equation} \label{final01}
 	\intav{B_{\lambda^n} } \left | u - a_{\infty} \right |^2 \omega(x) dx \le \left (1 +  \left [ \frac{\Lambda}{1-\lambda^{\mu/2}}  \right ]^2 \right ) \lambda^{n \cdot \mu}.
 \end{equation}
Now, fixing $y \in B_{1/2}$ and working with $v(x) = u(y+x)$, we reach \eqref{final01} for  $B_{\lambda^n}(y)$.  Owing to \eqref{final01}, it is easy to see that, if $u_{B_{\lambda^n(y)}}$ denotes
$\frac{1}{\omega({B_{\lambda^n(y)}})} \int_{{B_{\lambda^n(y)}}} u(x) \omega(x) dx$, then
$$
	\begin{array}{lll}
	 	\displaystyle \intav{B_{\lambda^n(y)} } \left | u - u_{B_{\lambda^n(y)}} \right |^2 \omega(x) dx &\le& 2 \Big ( \displaystyle \intav{B_{\lambda^n(y)} } \left | u - a_\infty  \right |^2 \omega(x) dx  \\
		&+&   \displaystyle \intav{B_{\lambda^n(y)} } \left | u_{B_{\lambda^n(y)}} - a_\infty  \right |^2 \omega(x) dx \Big ) 
	 \end{array}
 $$
 Furthermore,
 $$
 	\begin{array}{lll}
 		\displaystyle \intav{B_{\lambda^n(y)} } \left | u_{B_{\lambda^n(y)}} - a_\infty  \right |^2 \omega(x) dx  &=& \displaystyle \intav{B_{\lambda^n(y)} } \left | \intav{B_{\lambda^n(y)} } (u-a_\infty) \omega(x) dx \right |^2 \omega(x) dx \\
		&\le& \displaystyle \intav{B_{\lambda^n(y)} } \left | u-a_\infty \right |^2 \omega(x) dx,
	\end{array}
 $$
 by Jensen's inequality. Thus, we conclude that 
 \begin{equation} \label{final02}
 	\intav{B_{\lambda^n} } \left | u - u_{B_{\lambda^n(y)}} \right |^2 \omega(x) dx \le 4 \left (1 +  \left [ \frac{\Lambda}{1-\lambda^{\mu/2}}  \right ]^2 \right ) \lambda^{n \cdot \mu}.
 \end{equation}
Finally, as noted earlier, the measure $A \mapsto \int_A \omega(x) dx$ is doubling and clearly continuous in the sense that 
$$
	\lim\limits_{y \to x} \omega \left ( B_r(x) \triangle B_r(y) \right ) = 0,
$$
for all $r>0$. Thus, applying Campanato's theorem for doubling measures, see e.g. \cite[Theorem 3.2]{G}, we conclude  $u$ is H\"older continuous  and the proof of Theorem \ref{reg0} is complete.
 \end{proof}


\section{Homogenization}\label{section-homog}
We now turn our attention to limiting free boundary problems arising from homogenization. That is, hereafter we assume
\begin{itemize}
\item[(H2)] Let $0$ be a free boundary point. There exists $-d < \alpha \le 0$ such that, as $\lambda \to 0+$,
	$$
		\omega_\lambda( x) := \lambda^{|\alpha|} \omega (\lambda x),
	$$
	converges locally in $L^1$ to a weight $\omega_0(x) \in A_2$.
\end{itemize}

As we shall demonstrate, condition (H2) yields an elegant solution to the tantalizing question raised in the introduction on geometric classification of the behavior of $u$ at distinguished free boundary points.  When (H2) holds at a free boundary point, $0$, we will say that $0$ is a singular free boundary point of degree $|\alpha|$. In particular, regular free boundary points from classical study of cavitation problems, i.e. $\omega(0)$ finite, represent (singular) free boundary points of degree zero.

The weight $\omega_0$ is the homogenization limit, as it verifies $\omega_0(tx) = t^\alpha \omega_0(x)$, for all $t> 0$. Of course, should the original weight $\omega$ be homogeneous, then (H2) is immediately verified. 

The examples discussed at the end of Section \ref{sct math setup}  are, essentially, homogeneous. In addition to classical homogenization procedures, condition (H2) further contemplates perturbations of those by terms $g(x) = \text{o}(|x|^{-|\alpha|})$ as $|x| \to 0$, and products by bounded, positive functions $\theta(x) \in \text{VMO}$. So typically, we have in mind weights of the form
$$
	\omega(x) = \theta(x) \omega_0(x) + g(x),
$$
where $\omega_0$ is $\alpha$-homogeneous (as in the examples from Section \ref{sct math setup}),  $ \theta \in \text{VMO}$ verifying $0< \lambda_0< \theta(x) <\lambda_0^{-1}$, and  $|x|^{|\alpha|} g(x) \to 0$ as $|x| \to 0$.

\begin{lemma} \label{Lemma extra Hyp} Assume condition (H2) is in order. There exist constants $0< \tau_\star \le L < \infty$ such that
\begin{equation}\label{H3} 
	 \tau_\star r^{\alpha} \le  \intav{B_r(0)} \omega(x)dx \le L r^{\alpha},
\end{equation}
 or all $0< r \ll 1$. 
\end{lemma}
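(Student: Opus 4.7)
The plan is to reduce the average $\intav{B_r(0)}\omega$ to an average of the rescaled weight $\omega_r$ on the unit ball, and then invoke the $L^1_{\loc}$ convergence provided by (H2).

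First, for $r>0$ I change variables via $x = ry$ to write
$$
\intav{B_r(0)}\omega(x)\,dx \;=\; \intav{B_1(0)}\omega(ry)\,dy.
$$
By the very definition of the rescaling in (H2), $\omega_r(y) = r^{|\alpha|}\omega(ry)$, and since $-d<\alpha\le 0$ we have $|\alpha|=-\alpha$, so $\omega(ry) = r^{\alpha}\omega_r(y)$. Inserting this identity gives
$$
\intav{B_r(0)}\omega(x)\,dx \;=\; r^{\alpha}\intav{B_1(0)}\omega_r(y)\,dy.
$$

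Next I pass to the limit as $r\to 0^+$. Hypothesis (H2) states precisely that $\omega_r \to \omega_0$ in $L^1_{\loc}(\mathbb{R}^d)$, hence in $L^1(B_1)$. Setting
$$
C_0 \;:=\; \intav{B_1(0)}\omega_0(y)\,dy,
$$
the convergence above yields $\intav{B_1}\omega_r \to C_0$ as $r\to 0^+$.

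It remains to check that $0<C_0<\infty$. Finiteness is immediate since $\omega_0$ is an $A_2$ weight, hence locally integrable. Strict positivity is the one point that deserves attention: if $C_0=0$ then $\omega_0\equiv 0$ a.e.\ on $B_1$, which would force $\omega_0^{-1}\equiv +\infty$ on $B_1$, contradicting the local integrability of $\omega_0^{-1}$ required by the $A_2$ condition on $\omega_0$. Thus $C_0>0$.

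With $0<C_0<\infty$ in hand, for all $r$ sufficiently small one has $C_0/2 \le \intav{B_1}\omega_r \le 2C_0$, and the lemma follows with $\tau_\star := C_0/2$ and $L:= 2C_0$. The only step of any subtlety is the positivity of $C_0$, and that is settled by the $A_2$ membership of the homogenized weight $\omega_0$ already built into (H2).
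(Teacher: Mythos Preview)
Your argument is correct and is essentially the same as the paper's: both reduce the average over $B_r$ to $\intav{B_1}\omega_r$ via the change of variables $x=ry$, invoke the $L^1_{\loc}$ convergence $\omega_r\to\omega_0$ from (H2), and rule out $\omega_0\equiv 0$ using that $\omega_0\in A_2$. The only cosmetic difference is that the paper phrases the lower bound as a contradiction (a subsequence with $\intav{B_1}\omega_{r_j}\to 0$ would force $\omega_0=0\notin A_2$), whereas you argue directly that $C_0:=\intav{B_1}\omega_0\in(0,\infty)$ and take $\tau_\star=C_0/2$, $L=2C_0$; this is the same idea, just packaged more cleanly.
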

\begin{proof}
	For each $0< r \ll 1$ define the weight $\omega_r\colon B_1 \to \mathbb{R}$ as $\omega_r(y) = r^{|\alpha|} \omega(ry).$ It follows from condition (H2) that $\{\omega_r(y)\}_{r> 0}$ is a bounded set in $L^1(B_1)$. Hence, for some $L> 0$,
	$$
		L \ge  \intav{B_1(0)} \omega_r(x)dx =  r^{|\alpha|}  \intav{B_r(0)} \omega(x)dx,
	$$
	which shows the estimate from above. Now, let's suppose, seeking a contradiction, that no such a $\tau_\star> 0$ exists. It means one could find a sequence of radii $r_j =\text{o}(1)$ such that
	$$
		  \intav{B_{r_j}(0)} \omega(x)dx \le 2^{-j} r_j^{|\alpha|}.
	$$
	However, this would imply $\omega_{r_j}(y) = r_j^{|\alpha|} \omega(r_jy) \to 0$ in $L^1$, which contradicts condition (H2), as $0 \not \in A_2$.
\end{proof}
Before we continue, let us make a comment on the scaling of the free boundary problem \eqref{degFB}, which further substantiates (H2).
 \begin{remark}\label{scaling}
Let  $u$ be a local minimizer of 
$$
	\J(\omega, u,\Omega) := \int_{\Om} \omega(x) |\nabla u (x)|^2+\chi_{\{u>0\}} dx.
$$
Given  $0< \lambda < 1$, define 
$$
	\beta=1-\frac{\alpha}{2}, \quad  \quad u_\lambda(x) = \lambda^{-\beta} u(\lambda x), \quad  \text{and} \quad \omega_\lambda(x) = \lambda^{|\alpha|} \omega(\lambda x),
$$
then change of variables yields 
\begin{eqnarray*}
	\J(\omega, u,\Om)&=&\int_{\Om/\lambda}\left [ \omega(\lambda y) |\nabla u(\lambda y)|^2+\chi_{\{u(\lambda y)>0\}}\right]\lambda^d dy\\
	&=&\int_{\Om/\lambda}\left[\lambda^\alpha \omega_\lambda (y) \lambda^{2(-1+\beta)}|\nabla u_{\lambda}(y)|^2+\chi_{\{u_{\lambda}(y)>0\}}\right]\lambda^d dy \\
	&=& \lambda^d \J (\omega_\lambda, u_{\lambda},\Om/\lambda).
\end{eqnarray*}
That is, $u_\lambda$ is a local minimizer of functional $\J_\lambda$, ruled by an approximation of the homogenizing medium.   
\end{remark}

%

In the sequel we indeed show that local minima of the functional $\J_\lambda$ converge to a minimizer of the singular homogenized problem ruled by $\omega_0$.

\begin{theorem} \label{conv minima}
Assume 0 is a free boundary point and that $\omega$ satisfies (H1) and (H2). Let $\lambda_k$ be any sequence converging to zero and $u_k$ local minima of $\J_k = \J(\omega_k, v, \Omega)$, with $\omega_k(x) := \lambda_k^{|\alpha|} \omega(\lambda_k x)$. Then, up to a subsequence, $u_k$ converges locally uniformly to a local minimum of $\J(\omega_0, v, \Omega)$. 
\end{theorem}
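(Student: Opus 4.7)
My strategy is a three-step argument: establish compactness via uniform regularity, control the weighted Dirichlet energies through weak compactness, and verify local minimality of the limit via a $\Gamma$-convergence type construction with a recovery sequence.

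First I would establish compactness of $\{u_k\}$. A direct change of variables shows that the rescaled weights $\omega_k(y)=\lambda_k^{|\alpha|}\omega(\lambda_k y)$ all belong to the $A_2$ class of $\omega$ with the same constant, and by Lemma \ref{Lemma extra Hyp} their averages $\intav{B_r}\omega_k$ are uniformly comparable to $r^\alpha$. Combined with the $L^\infty$ bound of Theorem \ref{ext} and the scaling of Remark \ref{scaling}, this gives uniform local $L^\infty$ bounds for the $u_k$. Applying the universal H\"older estimate of Theorem \ref{reg0} to each rescaled problem yields a uniform $C^{0,\tau}_{\loc}$ bound, and Arzel\`a--Ascoli extracts a subsequence $u_k\to u_\infty$ converging locally uniformly on $\Omega$.

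Next I would pass the weighted Dirichlet energies to the limit. Testing minimality against the $\omega_k$-harmonic extension of $u_k|_{\partial B_R}$ on a ball $B_R\subset\subset\Omega$ yields $\int_{B_R}\omega_k|\nabla u_k|^2\le C$ uniformly. Setting $\xi_k:=\sqrt{\omega_k}\,\nabla u_k$, weak $L^2_{\loc}$ compactness extracts a weak limit $\xi_\infty$. Using $\omega_k\to\omega_0$ in $L^1_{\loc}$ together with the uniform convergence $u_k\to u_\infty$, I would identify $\xi_\infty=\sqrt{\omega_0}\,\nabla u_\infty$ by testing against smooth $\varphi\in C_c^\infty$, showing that $u_\infty\in H^1_{\loc}(\Omega,\omega_0)$ and
$$\int_{B_R}\omega_0|\nabla u_\infty|^2\le\liminf_{k\to\infty}\int_{B_R}\omega_k|\nabla u_k|^2.$$

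For local minimality, fix $v\in H^1_{\loc}(\Omega,\omega_0)$ with $\{v\neq u_\infty\}\subset\subset B_r\subset\subset B_R\subset\subset\Omega$ and a cutoff $\eta\in C_c^\infty(B_R)$ with $\eta\equiv 1$ on $B_r$. Define the recovery sequence
$$v_k:=\eta\,v+(1-\eta)\,u_k.$$
Then $v_k=u_k$ on $\partial B_R$ (so $v_k$ is admissible for $u_k$), and since $v=u_\infty$ on $\mathrm{supp}(\nabla\eta)$, the difference $v_k-v=(1-\eta)(u_k-u_\infty)\to 0$ uniformly on $B_R$. Expanding $|\nabla v_k|^2$ and invoking $\omega_k\to\omega_0$ in $L^1_{\loc}$ together with the uniform bound $\int\omega_k|\nabla u_k|^2\le C$ and the uniform smallness of $u_\infty-u_k$ on the annular support of $\nabla\eta$ gives $\lim_k\int_{B_R}\omega_k|\nabla v_k|^2=\int_{B_R}\omega_0|\nabla v|^2$. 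Combining the local minimality $\J(\omega_k,u_k,B_R)\le\J(\omega_k,v_k,B_R)$ with the lower semi-continuity of the previous step, one concludes
$$\J(\omega_0,u_\infty,B_R)\le\liminf_k\J(\omega_k,u_k,B_R)\le\lim_k\J(\omega_k,v_k,B_R)=\J(\omega_0,v,B_R).$$

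The principal obstacle is the discontinuous Bernoulli term $\chi_{\{u>0\}}$, which is neither lower nor upper semicontinuous under $L^1_{\loc}$ convergence. On the lower semicontinuity side, I would use that $\{u_\infty>0\}$ is open (by continuity of $u_\infty$) and that uniform convergence gives $\chi_{\{u_k>0\}}\ge\chi_{\{u_\infty>\varepsilon\}}$ eventually, so Fatou yields $\int\chi_{\{u_\infty>0\}}\le\liminf\int\chi_{\{u_k>0\}}$. On the recovery side, where $v_k\to v$ uniformly but positivity sets can disagree in the annulus and near $\{v=0\}$, I would adapt the measure-theoretic argument of \cite[Section 1.3]{AC} already invoked in the proof of Theorem \ref{ext}, splitting $B_R$ into the open set $\{v>0\}$ (where uniform positivity transfers) and a neighborhood of $\{v=0\}$ whose contribution to the jump term is negligible.
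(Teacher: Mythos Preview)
Your overall strategy matches the paper's: compactness via Theorem \ref{reg0}, lower semicontinuity through weak convergence of $\sqrt{\omega_k}\,\nabla u_k$, then a recovery sequence for the $\limsup$ inequality. The first two steps are essentially the paper's. The gap is in the third.

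You assert that expanding $|\nabla v_k|^2$ and using $\omega_k\to\omega_0$ in $L^1_{\loc}$, the uniform energy bound, and the uniform smallness of $u_\infty-u_k$ yields
\[
\lim_k\int_{B_R}\omega_k|\nabla v_k|^2=\int_{B_R}\omega_0|\nabla v|^2.
\]
This does not follow. On the annulus $B_R\setminus B_r$ you have $v=u_\infty$, so $v_k=\eta u_\infty+(1-\eta)u_k$ and
\[
\nabla v_k=(u_\infty-u_k)\nabla\eta+\eta\,\nabla u_\infty+(1-\eta)\,\nabla u_k.
\]
The term $\int_{B_R\setminus B_r}\omega_k(1-\eta)^2|\nabla u_k|^2$ sits inside $\int\omega_k|\nabla v_k|^2$, and for it you only have weak lower semicontinuity, not convergence; the uniform energy bound gives boundedness, not a limit, and the uniform smallness of $u_\infty-u_k$ says nothing about $\nabla u_k$. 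Consequently your chain of inequalities breaks at the last equality.

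The paper repairs exactly this point by a different construction: instead of a fixed cutoff, it interpolates linearly between the competitor $\varphi$ and $u_k$ across a \emph{thin} annulus $B_{(1+\epsilon)r}\setminus B_r$. The transition energy is then bounded using an additional ingredient you do not invoke, namely uniform higher integrability $\omega_k|\nabla u_k|^2\in L^{1+\delta}$, which via H\"older on a set of measure $\sim\epsilon$ yields
\[
\J(\omega_k,\varphi_k^\epsilon,B_{(1+\epsilon)r}\setminus B_r)\le C\epsilon^{\delta/(1+\delta)}+\epsilon^{-2}\,\mathrm{o}(1).
\]
One first sends $k\to\infty$, then $\epsilon\to 0$. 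Without either the vanishing-annulus parameter or the higher integrability, the annular $|\nabla u_k|^2$ contribution cannot be absorbed, and your recovery-sequence argument is incomplete.
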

\begin{proof} From Theorem \ref{reg0}, minimization properties and (H1), up to a subsequence, $u_k \to u_0$ locally uniformly in $\Omega$ and locally weakly in $W^{1,2}$.  Also, from (H2), $\omega^{1/2}_k \to \omega^{1/2}_0$ strongly in $L^{2}$, and thus  
$$
	\omega_k^{1/2} \nabla u_k \rightharpoonup \omega_0^{1/2} \nabla u_0,
$$
locally in the weak topology of $L^2$. Lower weak semicontinuity of norm along with Fatou's Lemma imply, for any fixed subdomain $\Omega' \Subset \Omega$, 
\begin{equation}\label{conv min0}
	\J(\omega_0, u_0, \Omega') \le \liminf \J(\omega_k, u_k, \Omega').
\end{equation}
Now, fix a ball $B:= B_r(x_0) \Subset \Omega$ and let $\varphi$ be a smooth function in $B$ satisfying $\varphi = u_0$ on $\partial B$. For $\epsilon > 0$ small, consider
$$
	\psi(y) := \frac{|y-x_0| - r}{\epsilon r}
$$
and define $\varphi^\epsilon_k \colon \Omega \to \mathbb{R}$ to be the linear interpolation between $u_k$ and $\varphi$ within $B_{(1+\epsilon)r}(x_0) $, that is:
\begin{equation}\label{varphi1}
\varphi^\epsilon_k(y) = \left\{	\begin{array}{ll}
		  \varphi(y), &\textrm{ if } |y-x_0|\le r \\
		 u_k(y), &\textrm{ if }  |y-x_0| \ge (1+\epsilon)r~ \\ 
		 (1-\psi(y)) u_0(y) +\psi(y) u_k(y), &\textrm{ if }  r< |y-x_0| < (1+\epsilon)r.~ 
	\end{array}\right.
\end{equation}
Since $u_k$ is a local minimizer of $\J_k$ over $\widetilde{B} := B_{(1+\epsilon)r}(x_0)$, we have 
$$\J(\omega_k, u_k, \widetilde{B}) \le \J(\omega_k, \varphi^\epsilon_k, \widetilde{B}),$$ 
that is
\begin{equation}\label{conv. min1}
	\J(\omega_k, u_k, \widetilde{B}) \le \J(\omega_k, \varphi, {B}) + \J(\omega_k, \varphi_k^\epsilon,  \widetilde{B}\setminus {B}).
\end{equation}
On the transition region, $r< |y-x_0| < (1+\epsilon)r$,  taking into account \eqref{varphi1}, we compute, for a.e. $y$,
$$
	\nabla \varphi^\epsilon_k(y) =  \nabla \psi(y) \cdot \left ( u_k(y) - u_0(y) \right )  + (1- \psi(y)) \nabla u_0(y) + \psi(y) \nabla u_k(y)
$$
which yields 
\begin{equation}\label{conv. min2}
	|\nabla \varphi^\epsilon_k(y)| \le |\nabla u_0(y)| + |\nabla u_k(y)| + \frac{C}{\epsilon} |u_k(y) - u_0(y)|.
\end{equation}
Consequently, taking into account that $|\widetilde{B} \setminus B| \sim \epsilon$, that $\omega_k(y) |\nabla u_k|^2$ is bounded in $L^{1+\delta}(\widetilde{B})$, for some $\delta>0$, and that $\omega_k(y) $ is bounded in $L^1$,  we obtain, from H\"older inequality
\begin{equation}\label{conv. min2.5}
	\begin{array}{lll}
	 	\J(\omega_k, \varphi_k^\epsilon,  \widetilde{B}\setminus {B}) &=& \displaystyle \int_{\tilde{B}\setminus B}  \omega_k(x) |\nabla \varphi_k^\epsilon|^2 + \chi_{\{\varphi_k^\epsilon > 0\}} dx \\
		& \le & C_1 |\widetilde{B} \setminus B|^{\frac{\delta}{1+\delta}} + C_2 \left ( \dfrac{C}{\epsilon} \right )^2 \|u_k - u_0\|_{L^\infty(\widetilde{B} \setminus B)} + \epsilon \\
		& \le & C\epsilon^\beta + \epsilon^{-2} \text{o}(1),
	\end{array} 
\end{equation}
where $0< \beta = \delta/(1+\delta) \ll 1$, is a small but positive number and  $\text{o}(1) = C  \|u_k - u_0\|_{L^\infty(\widetilde{B} \setminus B)}$ is an error that goes to zero as $k\to \infty$. Hence, combining \eqref{varphi1},  \eqref{conv. min1} and \eqref{conv. min2.5},
\begin{equation}\label{conv. min3}
\J(\omega_k, u_k, \widetilde{B}) \le  \J(\omega_k, \varphi, {B})   + C\epsilon^\beta + \epsilon^{-2} \text{o}(1),
\end{equation}
as $k \to \infty$. Finally, letting $k\to \infty$, taking into account \eqref{conv min0}, we obtain
$$
	\J(\omega_0, u_0, \tilde{B}) \le  \J(\omega_0, \varphi, {B}) + C\epsilon^\beta.
$$
Since $\varphi$ and $\epsilon> 0$ were taken arbitrary, we conclude the proof of the Theorem.
\end{proof}

\section{ $C^{1+\gamma}$ regularity at the free boundary} \label{sct sharp reg}

In Section \ref{sct comp} we showed minimizers are locally H\"older continuous, which, in particular, yields a rough oscillation control of $u$ near the free boundary. The heart of the matter, though, is to describe the precise geometric behavior of a local minimizer at free boundary points. Roughly speaking, solutions to singular free boundary problems should adjust their vanishing rate based upon the singularity of the medium.

While there is no hope to obtain an estimate superior than H\"older continuity of local minima, at any other point, in this section we show that 
$u$ behaves as a $C^{1+\gamma}$ function around a singular free boundary point. As usual, this implies higher order differentiability of $u$ at free boundary points. In particular, if $1+\gamma = N$ is an integer, then $u \in C^{N-1, 1}$, in the sense it is $N-1$ differentiable, and the $N$th-Newtonian quotient remains bounded.  When $1+\gamma = N +\theta$, for $0< \theta < 1$, then solutions are $C^{N, \theta}$ regular at free boundary points -- see \cite{T4} for similar higher differentiability phenomenon.

This is the content of the following key result:

\begin{theorem} \label{optimalRegAC}
Assume $(H1)-(H2)$  and let $u$ be a local minimizer of \eqref{degFB} so that 0 is a free boundary point. Then 
$$
	\sup\limits_{B_r} u \le C r^{1+ \frac{|\alpha|}{2}},
$$
for a universal  constant $C>0$, independent of $u$.
\end{theorem}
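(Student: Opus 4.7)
The plan is to prove a geometric decay of $\sup_{B_r} u$ along a dyadic sequence and then interpolate. Set $\beta := 1 + \tfrac{|\alpha|}{2}$ and, for a universal $\lambda \in (0, \tfrac12)$ to be chosen, define
\[
a_k := \lambda^{-k\beta}\,\sup_{B_{\lambda^k}}u \qquad (k\ge 0).
\]
The target estimate is equivalent to the uniform bound $\sup_k a_k \le C$. I intend to deduce it from the discrete \emph{decay estimate}: there exists a universal $C_0>0$ with
\[
a_{k+1}\le \max\{a_k,\,C_0\} \qquad\text{for every } k\ge 0.
\]
An induction then yields $a_k \le \max\{a_0, C_0\}$, and interpolation between consecutive dyadic radii produces $\sup_{B_r}u\le C r^\beta$ throughout $(0,1]$, with $a_0=\sup_{B_1}u$ controlled by the $L^\infty$ bound of Theorem \ref{ext}.

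The decay estimate I would prove by contradiction, combining the intrinsic scaling of Remark \ref{scaling} with the homogenization of Theorem \ref{conv minima}. Failure would supply local minimizers $u^{(j)}$ of $\J$, integers $k_j$, and numbers $C_j\to\infty$ such that $a_{k_j+1}(u^{(j)})>\max\{a_{k_j}(u^{(j)}),C_j\}$. Rescaling with $r_j := \lambda^{k_j}$, the functions $\tilde u_j(x):= r_j^{-\beta}u^{(j)}(r_jx)$ are local minimizers of $\J(\omega_{r_j},\cdot,\Omega/r_j)$, and $\{\omega_{r_j}\}$ sits inside a single $A_2$ class. The further normalization $w_j := \tilde u_j / M_j$ with $M_j := \sup_{B_1}\tilde u_j$ (which satisfies $M_j \ge \sup_{B_\lambda}\tilde u_j > \lambda^\beta C_j \to \infty$ by the failure hypothesis) turns $w_j$ into a local minimizer of
\[
\int \omega_{r_j}(x)\,|\nabla w|^2 + M_j^{-2}\chi_{\{w>0\}}\, dx
\]
with $0\le w_j\le 1$ on $B_1$, $w_j(0)=0$, and the decisive non-triviality $\sup_{B_\lambda}w_j > \lambda^\beta$. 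Observe that the jump coefficient $M_j^{-2}$ vanishes.

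I would next invoke Theorem \ref{reg0} (whose proof is unaffected when the jump coefficient is already below the universal threshold) to secure a uniform $C^{0,\tau}$-bound for $w_j$ on $B_{1/2}$, and extract a subsequence converging uniformly to some $w_\infty$; along a further subsequence $\omega_{r_j}\to \omega_\star$ in $L^1_{\loc}$ for some $A_2$ weight $\omega_\star$ (either the homogenized $\omega_0$ of (H2), if $r_j\to 0$, or a fixed $\omega_{r_\star}$, otherwise). Passage to the limit follows the template of Theorem \ref{conv minima}, simplified by the fact that $M_j^{-2}\to 0$ erases the jump term from the limiting energy; hence $w_\infty$ is weakly $\omega_\star$-harmonic on $B_{1/2}$. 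The Harnack inequality of Theorem \ref{HarnackFKS}, applicable since $\omega_\star\in A_2$, yields the strong maximum principle: as $w_\infty\ge 0$ and $w_\infty(0)=0$, one concludes $w_\infty\equiv 0$ on $B_{1/2}$. Yet uniform convergence and $\sup_{B_\lambda}w_j > \lambda^\beta$, combined with $\lambda<\tfrac12$, force $\sup_{B_\lambda}w_\infty \ge \lambda^\beta>0$, the sought contradiction.

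The principal technical obstacle I anticipate is the passage-to-limit step, in which one must adapt the competitor-layer construction of Theorem \ref{conv minima} to a setting where both the weight and the jump coefficient vary with $j$. The crucial point to verify is that the $L^{1+\delta}$ improvement for $\omega_{r_j}|\nabla w_j|^2$ used there is stable along the sequence; this should hold because the Gehring-type integrability gain depends only on the common $A_2$ class of $\{\omega_{r_j}\}$, and the vanishing jump term is a strictly favourable perturbation. A minor check is that the uniform $C^{0,\tau}$ estimate of Theorem \ref{reg0}, which involves a one-step normalization by $\rho$ and $t$, survives when the jump coefficient is already smaller than the universal $\epsilon$ from Lemma \ref{1st step}; this requires no change, since the iteration of Lemma \ref{1st step} simply processes the smaller jump without needing the preliminary rescaling.
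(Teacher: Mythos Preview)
Your proposal is correct and follows essentially the same strategy as the paper: a compactness/contradiction argument showing that normalized minimizers with vanishing jump coefficient converge to an $\omega_\star$-harmonic function vanishing at the origin, hence identically zero by the strong maximum principle, which forces the dyadic $\beta$-decay. The paper organizes this slightly differently --- it first isolates the compactness step as a separate flatness lemma (for any $\delta>0$ there exists $\epsilon>0$ so that normalized minimizers of $\J_\epsilon$ drop below $\delta$ on $B_{1/2}$), then performs a preliminary rescale by $\varrho_0=\epsilon_0^{1/(2-\alpha)}$ to land exactly on the threshold $\epsilon_0$, after which the $\beta$-scaling keeps the jump coefficient fixed and the lemma iterates cleanly --- whereas you fold the contradiction directly into the decay inequality $a_{k+1}\le\max\{a_k,C_0\}$ and let the normalization by $M_j\to\infty$ kill the jump term; the underlying analytic content is the same.
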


The proof of Theorem \ref{optimalRegAC} will be based on a geometric flatness improvement technique, in the spirit of \cite{T2, T3}. For that, we need:

\begin{lemma} \label{lemma F} Assume $(H1)-(H2)$  and let $u\in H^1(B_1,\omega)$ verify $0\le u \le 1$ in $B_1$, with $u(0) = 0$. Given $\delta > 0$, there exists $\epsilon>0$ depending only on $\delta$ and universal constants, such that if $u$ is a local minimizer of
\begin{equation}\label{Jepsilon}
	\J_\epsilon (u):= \int_{B_1} \omega_\lambda (x) |\nabla u (x)|^2+\epsilon \chi_{\{u>0\}} dx,
\end{equation}
for any $0< \lambda < 1$, then, in $B_{1/2}$, $u$ is at most $\delta$, that is,
\begin{equation}\label{conclusion lemma F}
	\sup\limits_{B_{1/2}} u \le \delta.
\end{equation}
\end{lemma}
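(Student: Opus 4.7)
\emph{Proof plan.} The natural strategy is a contradiction-compactness argument, built on the scale invariance of the $A_2$ class: the rescaled weight $\omega_\lambda(x)=\lambda^{|\alpha|}\omega(\lambda x)$ belongs to the \emph{same} $A_2$ class as $\omega$, so all the Fabes--Jerison--Kenig--Serapioni estimates (Harnack, H\"older for $\omega$-harmonic functions, weighted Poincar\'e, Theorems \ref{HolderFKS}--\ref{PoincFKS}) hold for $\omega_\lambda$ with constants that do not see $\lambda$. I would therefore assume, for contradiction, that the statement fails, producing $\delta_0>0$ together with sequences $\epsilon_k\downarrow 0$, $\lambda_k\in(0,1)$, and local minimizers $u_k$ of $\J_{\epsilon_k}$ with weight $\omega_{\lambda_k}$ satisfying $0\le u_k\le 1$, $u_k(0)=0$, and $\sup_{B_{1/2}} u_k>\delta_0$. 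After passing to a subsequence, I take $\lambda_k\to \lambda_\infty\in[0,1]$: when $\lambda_\infty=0$, hypothesis (H2) directly yields a limit weight $\tilde\omega\in A_2$ with $\omega_{\lambda_k}\to\tilde\omega$ in $L^1_{\mathrm{loc}}$; when $\lambda_\infty>0$, I would precompose with the additional rescaling by $\lambda_k/\lambda_\infty$ to reduce back to the former case so that (H2) can again be invoked.

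By Theorem \ref{reg0}, with constants depending only on the common $A_2$ class, the family $\{u_k\}$ is uniformly H\"older continuous in any compact subset of $B_1$, so along a further subsequence $u_k\to u_\infty$ locally uniformly, with $0\le u_\infty\le 1$, $u_\infty(0)=0$, and $\sup_{B_{1/2}} u_\infty\ge\delta_0$. To identify $u_\infty$ I would mimic the proof of Theorem \ref{conv minima}: for any ball $B_R\Subset B_1$ and any smooth $\varphi$ agreeing with $u_\infty$ on $\partial B_R$, build a thin-annulus linear interpolation competitor $\varphi^\epsilon_k$ between $\varphi$ and $u_k$, and exploit the near-minimality of $u_k$ together with the $L^1$-convergence $\omega_{\lambda_k}\to\tilde\omega$ (hence strong $L^2$-convergence of $\omega_{\lambda_k}^{1/2}$) and Proposition \ref{Closeness} to pass to the limit in
\[
\J(\omega_{\lambda_k},u_k,B_R)\le \J(\omega_{\lambda_k},\varphi,B_R)+\epsilon_k|B_R|+\text{(annular error)},
\]
the $\epsilon_k\chi_{\{u_k>0\}}$ term being negligible. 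This shows $u_\infty$ is $\tilde\omega$-harmonic in $B_1$. Since $u_\infty$ is non-negative, $\tilde\omega$-harmonic, and vanishes at the origin, Harnack (Theorem \ref{HarnackFKS}) applied on a descending chain of balls forces $u_\infty\equiv 0$ in $B_{1/2}$, contradicting $\sup_{B_{1/2}} u_\infty\ge\delta_0$.

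The main obstacle will be ensuring the universality of the H\"older and Harnack constants used along the sequence: the pointwise lower bound $\tau_0$ of (H1) rescales to $\lambda_k^{|\alpha|}\tau_0$ for $\omega_{\lambda_k}$, which degenerates as $\lambda_k\to 0$. One must therefore verify that the proof of Theorem \ref{reg0} can be rerun with constants depending only on the $A_2$ class of $\omega$, by keeping the weighted Poincar\'e inequality (Theorem \ref{PoincFKS}) in its weighted form throughout the iteration of Lemma \ref{1st step} rather than converting to the unweighted one via $\tau_0$. A secondary, more cosmetic, difficulty is the convergence of weights when $\lambda_\infty>0$, which is resolved by the auxiliary rescaling noted above so that hypothesis (H2) is always the one doing the work.
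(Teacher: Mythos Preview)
Your argument is the same contradiction--compactness scheme the paper uses: negate, extract a uniformly convergent subsequence via the H\"older estimate of Section~\ref{sct comp}, pass to the limit as in Theorem~\ref{conv minima} to obtain a $\tilde\omega$-harmonic limit $u_\infty\ge 0$ with $u_\infty(0)=0$, and conclude $u_\infty\equiv 0$. The paper invokes the strong maximum principle at the last step while you invoke Harnack; these are equivalent here.

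Two comments on the places where you go beyond the paper. First, your worry about $\tau_0$ is well placed: under $\omega\mapsto\omega_\lambda$ the pointwise lower bound in (H1) degenerates, and the paper's proof simply cites ``compactness estimates proven in Section~\ref{sct comp}'' without addressing this. Your proposed remedy---rerunning Lemma~\ref{1st step} with the weighted Poincar\'e inequality and never passing through the unweighted $L^2$ norm---is the right instinct, but note that Campanato's characterization at the end of Theorem~\ref{reg0} is stated for the unweighted $L^2$ mean oscillation, so you would also need the weighted Campanato--Morrey embedding (available in the $A_2$ setting, with constants depending only on the class). Second, your handling of the case $\lambda_k\to\lambda_\infty>0$ is slightly garbled: writing $\omega_{\lambda_k}(x)=\mu_k^{|\alpha|}\,\omega_{\lambda_\infty}(\mu_k x)$ with $\mu_k:=\lambda_k/\lambda_\infty\to 1$ gives $\omega_{\lambda_k}\to\omega_{\lambda_\infty}$ in $L^1_{\loc}$ by continuity of dilations, so no appeal to (H2) is needed (and none is available, since $\mu_k\not\to 0$). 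The paper sidesteps this by writing the limit weight as $\omega_0$ without comment; your case split is the more honest bookkeeping.
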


\begin{proof}
	Suppose, for the sake of contradiction, that the thesis of the Lemma does not hold. This means for some $\delta_0>0$, one can find a sequence  of functions $u_j \in H^1(B_1,\omega_j)$ satisfying:
	\begin{enumerate}
		\item $0\le u_j \le 1$ in $B_1$;
		\item $u_j(0) = 0$;
		\item $u_j$ is a local minimizer of $\J_{j} (u):= \int_{B_1} \omega_{\lambda_j}(x) |\nabla u (x)|^2+\frac{1}{j} \chi_{\{u>0\}} dx$;
	\end{enumerate}
	however,
	\begin{equation}\label{eq1-lemma F}
		\sup\limits_{B_{1/2}} u_j \ge \delta_0 \quad \forall j \in \mathbb{N}.
	\end{equation}
	
From compactness estimates proven in Section \ref{sct comp},  up to a subsequence,  $u_j$ converges locally uniformly  to a nonnegative function $u_\infty$,  with $u_\infty (0) = 0$. Also, from similar analysis as in Theorem \ref{conv minima}, $u_\infty$ is a local minimizer of 
	$$
		\J_0(v) := \int_{B_1} \omega_0 (x) |\nabla v (x)|^2 dx.
	$$
	Applying maximum principle (available for minimizers of the functional $\J_0$), we conclude $u_\infty \equiv 0$. That is, we have proven $u_j$ converges locally uniformly to $0$. Therefore, for $j\gg 1$, we reach a contradiction with \eqref{eq1-lemma F}. The proof of Lemma \ref{lemma F} is complete.
\end{proof}
\smallskip
\noindent {\it Proof of Theorem \ref{optimalRegAC}}. We will make few (universal) decisions. Initially we set 
	$$
		\delta_\star := 2^{\frac{\alpha}{2} - 1}.
	$$ 
	Lemma \ref{lemma F} assures the existence of a positive (universal) constant $\epsilon_0>0$ such that any normalized minimizer of
	$\J_{\epsilon_0}$, as defined in \eqref{Jepsilon} verifies \eqref{conclusion lemma F}, for $\delta_\star$. Define
	$$
		\varrho_0 := \sqrt[2-\alpha]{\epsilon_0}, \quad \text{and} \quad \tilde{u}(x) := u(\varrho_0 x).
	$$
	By remark \ref{scaling}, $\tilde{u}$ is a local minimizer of $\J_{\epsilon_0}$, and hence, from Lemma \ref{lemma F}, there holds:
	\begin{equation}\label{reg-S1}
		\sup\limits_{B_{1/2}} \tilde{u}(x) \le 2^{\frac{\alpha}{2} - 1}.
	\end{equation}
	Next, by induction, we iterate the previous argument to show 
	\begin{equation}\label{reg-Sk}
		\sup\limits_{B_{2^{-k}}} \tilde{u}(x) \le 2^{k\left (\frac{\alpha}{2} - 1\right )}.
	\end{equation}
	Estimate \eqref{reg-S1} gives the first step of induction, $k=1$. Now, suppose we have verified \eqref{reg-Sk} for $k=1,2,\cdots p$. Define
	$$
		\tilde{v}(x) := 2^{p\left (1 - \frac{\alpha}{2} \right )} \cdot \tilde{u} \left ( 2^{-p} x \right ).
	$$
	It follows from induction hypothesis that $0\le v \le 1$. Also, from scaling, we check that $\tilde{v}$ is also a minimizer of $\J_{\epsilon_0}$. Applying Lemma \ref{lemma F} to $\tilde{v}$ we conclude
	$$
		\sup\limits_{B_{1/2}} \tilde{v} \le 2^{\frac{\alpha}{2} - 1},
	$$
	which, in terms of $\tilde{u}$, gives  precisely the $(p+1)$ step of induction. 
	
	Now, given a (universally small) radius $r>0$, choose $k\in \mathbb{N}$, such that 
	$$
		2^{-(k+1)} < \varrho_0^{-1} r \le 2^{-k}.	
	$$
	We can then estimate
	$$
		\begin{array}{lll}
			\sup\limits_{B_r} u &=& \sup\limits_{B_{ \varrho_0^{-1} r}} \tilde{u} \\
			&\le & \sup\limits_{B_{2^{-k}}} \tilde{u} \\
			&\le & 2^{k\left (\frac{\alpha}{2} - 1\right )} \\
			&\le &\left ( \dfrac{2}{\varrho_0}\right )^{1-\frac{\alpha}{2}} \cdot r^{1+\frac{|\alpha|}{2}} \\
			&=& C r^{1+\frac{|\alpha|}{2}},
		\end{array}
	$$
	for $C>1$ universal, as required. \hfill $\square$

\section{Nondegeneracy and weak geometry} \label{sct nondeg}

In the previous section we show local minima are $C^{1+\gamma}$ smooth along the singular free boundary, for $\gamma = \frac{|\alpha|}{2}$. In this section we prove  a competing inequality which assures that such a geometric decay is sharp.  Surprisingly enough, for such an estimate one only needs the corresponding upper bound for the degree of singularity of the free boundary and not the full condition (H2). Thus, for didactical purposes we state it as a separate condition, which, in accordance to Lemma \ref{Lemma extra Hyp}, is implied by condition (H2).

\begin{itemize}
\item[(H3)] For some $-d< \alpha \le 0$, there exists ${L}>0$ such that
	$$
		\intav{B_r(0)} \omega(x)dx \le L r^\alpha
	$$
	for all $0< r \ll 1$.
 \end{itemize}

Again, we recall $0$ is the localized free boundary point at where we are analyzing the geometric behavior of $u$. Thus, condition (H3) simply conveys the idea that the origin is a singular free boundary point of degree {\it at most} $|\alpha|$. Note that (H3) yields
\begin{equation}\label{commH3}
	\sup\limits_{r>0}  \intav{B_1} \omega_r(x)dx \le {L},
\end{equation}
where, as before, $\omega_r(x) = r^{|\alpha|} \omega (rx)$.
\begin{theorem}\label{nonDegAC} Let $u$  be a minimizer of \eqref{degFB}, $0$ a free boundary point and  assume (H3). Then
$$
	\sup\limits_{B_r} u(x) \geq 2 \cdot \sqrt{ \dfrac{1}{L} \dfrac{d^d}{(d+2)^{d+2}} } \cdot  r^{1+ \frac{|\alpha|}{2}}, \quad \forall 0<r<1,
$$
where $d$ is dimension. 
\end{theorem}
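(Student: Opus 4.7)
The plan is a proof by contradiction, in the spirit of the Alt--Caffarelli non-degeneracy lemma, combined with an optimization of a scale parameter $\lambda\in(0,1)$ that will produce the precise constant. The choice is guided by the elementary identity $\max_{\lambda\in[0,1]}\lambda^d(1-\lambda)^2=\tfrac{4d^d}{(d+2)^{d+2}}$, with maximum attained at $\lambda_\star=d/(d+2)$.

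Setting $M:=\sup_{B_r}u$ and supposing for contradiction that $M^2<\tfrac{4d^d}{L(d+2)^{d+2}}\,r^{2+|\alpha|}$, for each $\lambda\in(0,1)$ I would introduce the radial ``collar'' barrier
\[
\phi_\lambda(x)\;=\;M\cdot\min\!\left(1,\,\frac{(|x|-\lambda r)_+}{(1-\lambda)r}\right),
\]
which equals $0$ on $B_{\lambda r}$, rises linearly across the annulus to reach $M$ on $\partial B_r$, and satisfies $|\nabla\phi_\lambda|=M/((1-\lambda)r)$ on the annulus $B_r\setminus B_{\lambda r}$. The competitor $\tilde u:=\min(u,\phi_\lambda)$ coincides with $u$ on $\partial B_r$ (as $u\le M=\phi_\lambda$ there) and vanishes identically on $B_{\lambda r}$, hence is admissible. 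The relation $\{\tilde u>0\}\cap B_r=\{u>0\}\cap(B_r\setminus B_{\lambda r})$ combined with the minimization $\mathscr{J}(u,\Omega)\le\mathscr{J}(\tilde u,\Omega)$ forces
\[
|\{u>0\}\cap B_{\lambda r}|\;+\;\int_{B_r}\omega(x)|\nabla u|^2\,dx\;\le\;\int_{B_r}\omega(x)|\nabla\tilde u|^2\,dx.
\]
A regionwise computation of the right-hand side, noting $\nabla\tilde u\equiv0$ on $B_{\lambda r}$ and $\nabla\tilde u\in\{\nabla u,\nabla\phi_\lambda\}$ on the annulus, followed by discarding the nonpositive contributions and invoking the sharp control $\int_{B_r}\omega\le L\omega_d r^{d+\alpha}$ furnished by assumption (H3), distills the estimate down to
\[
|\{u>0\}\cap B_{\lambda r}|\;\le\;\frac{L\omega_d M^2 r^{d+\alpha-2}}{(1-\lambda)^2}.
\]

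To close the contradiction, I would pair this upper bound with the companion density lower bound $|\{u>0\}\cap B_{\lambda r}|\ge\omega_d(\lambda r)^d$. One natural route is a second comparison with a ``peaked'' competitor $\max(u,\psi_\lambda)$ built from a cap function centered at $0$, which after a parallel variational analysis forces positivity of $u$ on the bulk of $B_{\lambda r}$ up to a negligible set. Substituting and rearranging produces $\lambda^d(1-\lambda)^2\le LM^2/r^{2+|\alpha|}$; maximizing the left-hand side over $\lambda\in(0,1)$ then yields $M\ge 2\sqrt{d^d/(L(d+2)^{d+2})}\,r^{1+|\alpha|/2}$, contradicting the standing hypothesis. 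The hardest step is the density lower bound: the upper-bound inequality coming from minimization is clean and follows from (H3) together with the admissible-competitor calculus, but converting it into a genuine non-degeneracy requires exploiting the free boundary condition $0\in\partial\{u>0\}$ together with the continuity of $u$ from Theorem \ref{reg0}; the remaining variational and measure-theoretic manipulations are routine.
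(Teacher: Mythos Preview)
Your competitor $\tilde u=\min(u,\phi_\lambda)$, the energy bound via (H3), and the optimization over $\lambda$ at $\lambda_\star=d/(d+2)$ are exactly the paper's argument (the paper rescales first to $v_r(y)=r^{\alpha/2-1}u(ry)$ and works on $B_1$, but this is cosmetic). Up to and including the inequality
\[
|\{u>0\}\cap B_{\lambda r}|\;\le\;\frac{L\omega_d M^2 r^{d+\alpha-2}}{(1-\lambda)^2}
\]
your proposal and the paper coincide.

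The genuine gap is your ``hardest step''. The full-density lower bound $|\{u>0\}\cap B_{\lambda r}|\ge |B_{\lambda r}|$ cannot be obtained from a competitor of the form $\max(u,\psi_\lambda)$ with $\psi_\lambda$ a cap supported in $B_{\lambda r}$. Indeed, $\{\max(u,\psi_\lambda)>0\}\supset\{u>0\}\cup\{\psi_\lambda>0\}$, so the characteristic-function term \emph{increases} when you pass from $u$ to $\max(u,\psi_\lambda)$; minimality $\J(u)\le\J(\max(u,\psi_\lambda))$ then yields an inequality in which the measure of $\{u>0\}$ appears with the wrong sign to conclude anything about its size. No ``parallel variational analysis'' extracts positivity of $u$ on $B_{\lambda r}$ from such a comparison. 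Moreover, at a genuine free boundary point the zero phase typically has positive density, so the \emph{full}-measure bound you aim for is simply false in general, and with it the contradiction you seek.

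The paper does not go through any such density lemma. It writes the difference $\mathcal{B}=\int_{B_1}(\chi_{\{v_r>0\}}-\chi_{\{\xi>0\}})\,dx$ and estimates it below by $|B_\sigma|$ in one line, then combines this directly with the upper bound on $\mathcal{A}$ to obtain $\mathscr{S}_r^2\ge \sigma^d(1-\sigma)^2/L$ and optimizes in $\sigma$; there is no auxiliary $\max$-competitor and no separate ``positivity'' step. If you want to follow the paper, drop the contradiction framing and the second competitor entirely and argue the lower bound for $\mathcal{B}$ directly.
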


\begin{proof} The idea of the proof is to cut a family of concentric holes on the graph of $u$, compare the resulting functions with $u$ in terms of the minimization problem $\J$, and finally optimize the cutting-hole parameter; here are the details. Let $0 < r <1$ be a fixed radius and define $v_r \colon B_1 \to \mathbb{R}$ as
$$
	v_r(y) := r^{\frac{\alpha}{2} -1 } u( r y).
$$ 
The goal is to show that 
$$
	\mathscr{S}_r := \sup\limits_{B_1} v_r
$$
is uniformly bounded from below, independently of $r$. From Remark \ref{scaling}, $v_r$ is a local minimizer of $\J_r$ over $B_1$. Next, let us choose $0<\sigma < 1$, $0<\epsilon \ll 1$ and craft a smooth, radially symmetric function $\varphi \colon B_1 \to \mathbb{R}$  satisfying:
\begin{equation}
0 \le \varphi \le 1, \quad \varphi \equiv 0 \text{ in } B_{\sigma}, \quad \varphi = 1 \text{ on } \partial B_1, \quad |\nabla \varphi| \le  (1+\epsilon)(1-\sigma)^{-1}.
\end{equation}
In the sequel, let us consider the test function $\xi \colon B_1 \to \mathbb{R}$ given by
$$
	\xi(x) := \min \left \{ v_r(x), (1+\epsilon) \mathscr{S}_r \cdot \varphi(x) \right \}.
$$
By construction, $\xi$ competes with $v_r$ in the minimization problem $\J_r$, and thus 
\begin{equation}\label{nondeg eq001}
	\int_{B_1}\left (  \omega_r(x)  |\nabla \xi|^2 + \chi_{\left \{ \xi >0\right \}}  \right )  dx  \ge \int_{B_1} \left (   \omega_r(x)  |\nabla v_r|^2 +  \chi_{\left \{ v >0\right \}}   \right ) dx.
\end{equation}
We can rewrite \eqref{nondeg eq001} as an inequality of the form $\mathcal{A} \ge \mathcal{B}$, for
\begin{equation}\label{nondeg eq 0015}
	\begin{array}{ll}
		\mathcal{A} := & \displaystyle \int_{B_1} \omega_r(x) \left ( |\nabla \xi|^2-|\nabla v|^2 \right ) dx,  \\
		\mathcal{B} :=  &  \displaystyle  \int_{B_1} \left ( \chi_{\left \{ v_r >0\right \}}-\chi_{\left \{ \xi >0\right \}}  \right ) dx.
	\end{array}
\end{equation}
As to estimate $\mathcal{B}$ from below we note that $\left \{ v >0\right \} \supset \left \{ \xi >0\right \}$, thus  
\begin{equation}\label{nondeg eq 0018}
	 \mathcal{B} = \displaystyle  \int_{B_1} \chi_{\{\xi = 0\}} dx \ge |B_\sigma|.
\end{equation}
Next we estimate $\mathcal{A}$ from above. For that, let us define 
$$
	\Pi := \left \{x \in B_1 \suchthat (1+\epsilon) \mathscr{S}_r \cdot \varphi(x) < v_r(x) \right \}
$$
and compute
\begin{equation}\label{nondeg eq002}
	\begin{array}{lll}
		\displaystyle \int_{B_1} \omega_r(y) \left ( |\nabla \xi|^2-|\nabla v_r|^2 \right ) dx  & = & \displaystyle \int\limits_{ \Pi } \omega_r(x) \left ( |\nabla \xi|^2-|\nabla v_r|^2 \right ) dx \\
		& \le &  (1+\epsilon)^2  \mathscr{S}_r^2 \displaystyle \int\limits_{B_1} \omega_r(x) \  |\nabla \varphi|^2  dx \\
		& \le &   (1+\epsilon)^4 (1-\sigma)^{-2} \cdot \left (\displaystyle  \int\limits_{B_1} \omega_r(x) dx  \right ) \cdot  \mathscr{S}_r^2 \\
		& \le &  (1+\epsilon)^4 (1-\sigma)^{-2} \cdot L |B_1| \cdot  \mathscr{S}_r^2,
	\end{array}
\end{equation}
where in the last inequality we have used condition (H3), along with comment \eqref{commH3}. From the relation $\mathcal{A} \ge \mathcal{B}$, we obtain
$$
	 \begin{array}{lll}
	 	 \mathscr{S}_r^2 &\ge& \dfrac{|B_\sigma|}{L|B_1|} \cdot \dfrac{(1-\sigma^2)}{(1+\epsilon)^4} \\
		 			&=& \dfrac{1}{L(1+\epsilon)^4} \cdot \dfrac{\sigma^d}{(1-\sigma^2)}.
	\end{array}
$$
Letting $\epsilon \to 0$ and selecting $\sigma = \dfrac{d}{d+2}$ yields the optimal lower bound. The proof of Theorem \ref{nonDegAC} is complete.
\end{proof}

An important consequence of Theorem \ref{optimalRegAC} and Theorem \ref{nonDegAC} combined is that, around free boundary points of same homogeneity $\alpha$, a local minimum detaches from its coincidence set, $\{u=0\}$, precisely as $\dist^{{1+ \frac{|\alpha|}{2}}}$.

\begin{corollary} 
Let $u$ be a minimizer of \eqref{degFB} and assume all free boundary points in $B_{1/2}$ satisfy conditions (H1) and (H2). Then there exists a universal constant $C>1$ such that
$$
	  C^{-1} \dist(x,\partial \left \{ u>0 \right \} )^{1+ \frac{|\alpha|}{2}}\le u(x) \le C \cdot  \dist(x,\partial \left \{ u>0 \right \} )^{1+ \frac{|\alpha|}{2}}
$$
 for any $x \in B_{1/4} \cap \{u > 0 \}$.
\end{corollary}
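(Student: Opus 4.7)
The strategy is to combine the sharp regularity of Theorem \ref{optimalRegAC} with the non-degeneracy of Theorem \ref{nonDegAC}, using the interior Harnack inequality of Theorem \ref{HarnackFKS} as the bridge between ``supremum-in-a-ball'' non-degeneracy and ``pointwise'' lower bounds. Fix $x_0 \in B_{1/4} \cap \{u > 0\}$, set $d := \dist(x_0, \partial\{u > 0\}) \in (0, 1/4]$, and choose a free boundary point $z_0 \in \partial\{u > 0\}$ with $|x_0 - z_0| = d$; note $z_0 \in B_{1/2}$, so that (H1)--(H2) hold at $z_0$ and both theorems apply there.

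The upper bound is immediate. Since $x_0 \in B_{2d}(z_0)$, Theorem \ref{optimalRegAC} applied at $z_0$ yields
$$
u(x_0) \leq \sup_{B_{2d}(z_0)} u \leq C(2d)^{1+|\alpha|/2},
$$
which is exactly the right-hand estimate (with $C$ universal). This upper bound will itself be used in the lower bound.

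For the lower bound, apply Theorem \ref{nonDegAC} at $z_0$ on scale $r = d$ to produce $y_0 \in \overline{B_d(z_0)}$ with $u(y_0) \geq c\,d^{1+|\alpha|/2}$. In particular $y_0 \in \{u > 0\}$, and the upper bound already established, applied at the free boundary point closest to $y_0$, gives
$$
c\,d^{1+|\alpha|/2} \leq u(y_0) \leq C\,\dist(y_0, \partial\{u > 0\})^{1+|\alpha|/2},
$$
so that $\dist(y_0, \partial\{u > 0\}) \geq c_1 d$ for a universal $c_1 > 0$. One then transports the lower bound from $y_0$ to $x_0$ by a Harnack chain: $|x_0 - y_0| \leq 2d$ and both endpoints sit at distance $\gtrsim d$ from the free boundary, so a polygonal arc in $\{u > 0\}$ of length $O(d)$ (bending the segment $[x_0, y_0]$ slightly away from $z_0$ if needed) can be covered by a universally bounded number of balls of radius comparable to $d$ on each of which $u$ is positive and $\omega$-harmonic, so that $\div(\omega \nabla u) = 0$ there. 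Iterating Theorem \ref{HarnackFKS} along this chain produces a universal $K > 0$ with $u(x_0) \geq K^{-1} u(y_0) \geq K^{-1} c\, d^{1+|\alpha|/2}$, which is the left-hand estimate.

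The main obstacle is the construction of the Harnack chain: one must be sure $x_0$ and $y_0$ lie in the same connected component of $\{u > 0\}$ and can be joined there without approaching the free boundary. This is precisely where the distance estimate $\dist(y_0, \partial\{u > 0\}) \geq c_1 d$, together with $|x_0 - y_0| \leq 2d$ and $\dist(x_0, \partial\{u > 0\}) = d$, becomes crucial: a universally small deformation of the straight segment suffices, using that $z_0$ is essentially the only free boundary point that can interfere at scale $d$. With the chain in hand, the two inequalities combine to yield the stated two-sided comparability with a universal constant $C > 1$.
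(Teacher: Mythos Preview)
The paper itself offers no proof of this corollary; it is simply announced as a consequence of Theorems~\ref{optimalRegAC} and~\ref{nonDegAC}. Your strategy is the natural one, and the upper bound is correct as written.

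The lower bound, however, has a genuine gap at the Harnack-chain step. You have not shown that $x_0$ and $y_0$ can be joined inside $\{u>0\}$ by a curve along which the distance to the free boundary stays comparable to $d$. The assertion that ``$z_0$ is essentially the only free boundary point that can interfere at scale $d$'' is unjustified: nothing rules out $\partial\{u>0\}$ having complicated geometry near $z_0$, and since $c_1$ may be small the balls $B_d(x_0)$ and $B_{c_1 d}(y_0)$ need not overlap, so the free boundary could in principle separate them. Without further information on $\partial\{u>0\}$ (which is exactly what this corollary and the next are meant to initiate), the chain cannot be built.

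The clean fix avoids the chain entirely. Observe that the cutting-hole proof of Theorem~\ref{nonDegAC} does not use that the center is a free boundary point; it only uses the bound \eqref{commH3} on the rescaled weight, together with positivity of $u$ on the inner ball $B_\sigma$. Centering at $x_0$ with radius $d/4$, one has $B_{d/4}(x_0)\subset B_{2d}(z_0)$, so (H3) at $z_0$ gives
\[
\Bigl(\tfrac{d}{4}\Bigr)^{|\alpha|}\intav{B_{d/4}(x_0)}\omega \;\le\; 8^{\,n}\,(2d)^{|\alpha|}\intav{B_{2d}(z_0)}\omega \;\le\; C\,L,
\]
and since $B_{d/4}(x_0)\subset\{u>0\}$ the estimate $\mathcal{B}\ge |B_\sigma|$ in that proof is automatic. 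One therefore obtains $\sup_{B_{d/4}(x_0)} u \ge c\,d^{\,1+|\alpha|/2}$. Now a \emph{single} application of the Harnack inequality (Theorem~\ref{HarnackFKS}) in $B_d(x_0)\subset\{u>0\}$ yields $u(x_0)\ge C_H^{-1}\sup_{B_{d/4}(x_0)} u$, and the lower bound follows with no connectivity issue.
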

By standard arguments we then conclude that near  free boundary points of same homogeneity, the set of positivity $\{u> 0 \}$ has uniform positive density:
\begin{corollary} 
Let $u$ be a minimizer of \eqref{degFB} and assume all free boundary points in $B_{1/2}$ satisfy (H1) and (H2). Then, for any $0< r< \frac{1}{4}$,
$$
	  \dfrac{\left | B_r \cap \{u > 0 \} \right |}{ \left | B_r \right |} \ge C,
$$
where $0<C< 1$ is a universal constant, independent of $r$.
\end{corollary}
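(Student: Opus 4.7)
The plan is to deduce positive density of the positivity set at a free boundary point by combining two ingredients already established: the non-degeneracy estimate of Theorem \ref{nonDegAC}, which forces $u$ to reach the sharp growth rate inside $B_r$, and the preceding corollary, whose upper bound on $u$ in terms of $\dist(\cdot,\partial\{u>0\})$ compels the point where $u$ is proportionally large to sit deep inside $\{u>0\}$. Together they produce an interior ball of radius comparable to $r$ contained in $\{u>0\}\cap B_r$, from which the density bound is immediate.

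More precisely, fix a free boundary point $x_0 \in B_{1/2}$ and, by translation, assume $x_0 = 0$, so that $B_r = B_r(0)$. I would first apply Theorem \ref{nonDegAC} on the half-ball $B_{r/2}$: by continuity there exists $y\in\overline{B_{r/2}}$ with
$$
u(y) \;=\; \sup_{B_{r/2}} u \;\geq\; c_0\, r^{1+|\alpha|/2},
$$
for a universal $c_0>0$, so in particular $y\in\{u>0\}$. Since every free boundary point in $B_{1/2}$ satisfies (H1)--(H2), the two-sided bound of the preceding corollary applies at $y$ and gives
$$
u(y) \;\leq\; C\,\dist\bigl(y,\partial\{u>0\}\bigr)^{1+|\alpha|/2}.
$$
Combining the two displays yields $\dist(y,\partial\{u>0\})\geq c_1\, r$, where $c_1 := (c_0/C)^{1/(1+|\alpha|/2)}$ is universal. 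Shrinking $c_1$ if necessary so that $c_1\leq 1/2$, one obtains on the one hand $B_{c_1 r}(y)\subset\{u>0\}$ (from the distance bound), and on the other hand $B_{c_1 r}(y)\subset B_r$ (since $|y|+c_1 r \leq r/2+r/2 = r$).

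Consequently $|B_r\cap\{u>0\}| \geq |B_{c_1 r}(y)| = c_1^d\,|B_r|$, so the statement holds with $\mu := c_1^d$, which is universal. The argument is essentially mechanical once the sharp regularity and non-degeneracy are in place; the only mild obstacle is ensuring that the interior ball $B_{c_1 r}(y)$ one extracts at $y$ actually lies inside $B_r$, which is why one picks $y$ in $B_{r/2}$ rather than in $B_r$ itself, and then forces $c_1 \leq 1/2$. No further homogenization or classification of free boundary points is needed beyond what has already been proved.
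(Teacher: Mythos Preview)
Your argument is correct and is precisely the ``standard argument'' the paper invokes without details: non-degeneracy produces a point $y\in B_{r/2}$ where $u(y)\gtrsim r^{1+|\alpha|/2}$, and the upper bound from the preceding corollary forces a clean ball $B_{c_1 r}(y)\subset\{u>0\}\cap B_r$. The only cosmetic remark is that in the paper's setup the origin is already the distinguished free boundary point, so the translation step is unnecessary.
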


\end{document}